\providecommand{\U}[1]{\protect\rule{.1in}{.1in}}
\providecommand{\U}[1]{\protect\rule{.1in}{.1in}}
\providecommand{\U}[1]{\protect\rule{.1in}{.1in}}
\newtheorem{theorem}{Theorem}[section]
\newtheorem{corollary}[theorem]{Corollary}
\newtheorem{proposition}[theorem]{Proposition}
\newtheorem{lemma}[theorem]{Lemma}
\theoremstyle{definition}
\newtheorem{example}[theorem]{Example}
\begin{document}
\title[On the mixed $\left( \ell _{1},\ell _{2}\right) $-Littlewood
inequality and applications]{On the mixed $\left( \ell _{1},\ell _{2}\right)
$-Littlewood inequality for real scalars and applications}
\author[Daniel Pellegrino]{Daniel Pellegrino}
\address{Departamento de Matem\'{a}tica, Universidade Federal da Para\'{\i}%
ba, 58.051-900 - Jo\~{a}o Pessoa, Brazil.}
\email{pellegrino@pq.cnpq.br and dmpellegrino@gmail.com}
\author[Diana Serrano-Rodr\'{\i}guez]{Diana M. Serrano-Rodr\'{\i}guez}
\address{Departamento de Matem\'{a}tica, Universidade Federal de Pernambuco,
Recife, Brazil.}
\thanks{D. Pellegrino is supported by CNPq}
\subjclass[2010]{11Y60, 47H60.}
\keywords{ Mixed $\left( \ell _{1},\ell _{2}\right) $-Littlewood inequality,
multiple summing operators}

\begin{abstract}
In this paper we obtain the sharp estimates for the mixed $\left( \ell
_{1},\ell _{2}\right) $-Littlewood inequality for real scalars with
exponents $\left( 2,1,2,2....,2\right) .$ These results are applied to find
sharp estimates for the constants of a family of $3$-linear
Bohnenblust--Hille inequalities with multiple exponents.
\end{abstract}

\maketitle


\section{Introduction}

The mixed $\left( \ell _{1},\ell _{2}\right) $-Littlewood inequality for
real scalars asserts that for all continuous real $m$-linear forms $%
U:c_{0}\times \cdots \times c_{0}\rightarrow \mathbb{R}$ we have%
\begin{equation}
\sum_{j_{1}=1}^{N}\left( \sum_{j_{2},...,j_{m}=1}^{N}\left\vert
U(e_{j_{1}},...,e_{j_{m}})\right\vert ^{2}\right) ^{\frac{1}{2}}\leq \left(
\sqrt{2}\right) ^{m-1}\left\Vert U\right\Vert ,  \label{u8}
\end{equation}%
for all positive integers $N.$ From this inequality, using the intrinsic
symmetry of the context, it is not difficult, using a Minkowski-type
inequality, to prove that in fact for each $k\in \{2,...,m\}$ we have%
\begin{equation}
\left( \sum_{j_{1},...,j_{k-1}=1}^{N}\left( \sum_{j_{k}=1}^{N}\left(
\sum_{j_{k+1},...,j_{m}=1}^{N}\left\vert
U(e_{j_{1}},...,e_{j_{m}})\right\vert ^{2}\right) ^{\frac{1}{2}\times
1}\right) ^{\frac{1}{1}\times 2}\right) ^{\frac{1}{2}}\leq \left( \sqrt{2}%
\right) ^{m-1}\left\Vert U\right\Vert ,  \label{0009}
\end{equation}%
which is also called mixed $\left( \ell _{1},\ell _{2}\right) $-Littlewood
inequality. When we replace $\mathbb{R}$ by $\mathbb{C}$ it is well known
that $\left( \sqrt{2}\right) ^{m-1}$ can be replaced by $\left( 2/\sqrt{\pi }%
\right) ^{m-1}.$ In a more simplified notation we have inequalities with
\textquotedblleft multiple\textquotedblright\ exponents $\left(
1,2,2,....,2\right) ,...,\left( 2,....,2,1\right) $. These inequalities are
a crucial tool to prove the Bohnenblust--Hille inequality for multilinear
forms. Recall that the multilinear Bohnenblust--Hille inequality (\cite{bh})
for $\mathbb{K}=\mathbb{R}$ or $\mathbb{C}$ asserts that there exists a
sequence of positive scalars $\left( B_{m}^{\mathbb{K}}\right)
_{m=1}^{\infty }$ in $[1,\infty )$ such that
\begin{equation}
\left( \sum\limits_{i_{1},\ldots ,i_{m}=1}^{N}\left\vert
U(e_{i_{^{1}}},\ldots ,e_{i_{m}})\right\vert ^{\frac{2m}{m+1}}\right) ^{%
\frac{m+1}{2m}}\leq B_{m}^{\mathbb{K}}\sup_{\left\{ \left\Vert
z_{j}\right\Vert =1:j=1,...,m\right\} }\left\vert U(z_{1},\ldots
,z_{m})\right\vert  \label{ul}
\end{equation}%
for all continuous $m$-linear forms $U:c_{0}\times \cdots \times
c_{0}\rightarrow \mathbb{K}$ and every positive integer $N$, where $\left(
e_{i}\right) _{i=1}^{\infty }$ denotes the sequence of canonical vectors of $%
c_{0}.$ The Bohnenblust--Hille inequality can be seen as a predecessor of
the multilinear theory of absolutely summing operators (see, for instance,
\cite{botelho, matos, popa, pilar} and the references therein).

The connections between the mixed $\left( \ell _{1},\ell _{2}\right) $%
-Littlewood inequality and the Bohnenblust--Hille inequality are well-known
and can be easily explained with the interpolative approach from \cite[%
Section 2]{abps}. To obtain the Bohnenblust--Hille inequality from the mixed
$\left( \ell _{1},\ell _{2}\right) $-Littlewood inequalities it suffices to
observe that the exponent $\frac{2m}{m+1}$ can be seen as a multiple
exponent $\left( \frac{2m}{m+1},...,\frac{2m}{m+1}\right) $ and this
exponent is precisely the interpolation of the exponents%
\begin{equation}
\left( 1,2,2,....,2\right) ,...,\left( 2,....,2,1\right)  \label{87}
\end{equation}%
with $\theta _{1}=\cdots =\theta _{m}=1/m.$

It was recently proved in \cite{natal} that for real scalars the values $%
\left( \sqrt{2}\right) ^{m-1}$ are the sharp constants for (\ref{u8}).
However, the proof of \cite{natal} cannot be straightforwardly extended to
the general family of inequalities (\ref{0009}). Of course, a natural
question, whose answer can be useful in other inequalities, is whether the
upper estimates $\left( \sqrt{2}\right) ^{m-1}$ can be improved as long as
the exponent $1$ moves from the left to the right in (\ref{87}). In this
paper, among other results we show that for the multiple exponent $\left(
2,1,2,2,...,2\right) $ the sharp constants are still $\left( \sqrt{2}\right)
^{m-1}.$ It is worth mentioning that our approach seems to be not effective
to cover all the remaining cases $\left( 2,2,1,2,...,2\right) ,...,\left(
2,2,...,2,1\right) $ and this some open problems remain waiting for an
answer.

The exact values for the optimal constants $B_{m}^{\mathbb{K}}$ satisfying (%
\ref{ul}) are still unknown, although many progresses have been made in the
last few years. Having nice estimates for these constants is, in general,
crucial for applications (for instance in Quantum Information Theory (see
\cite{Mont}), and Complex Analysis \cite{bps}). The first estimates for $%
\left( B_{m}^{\mathbb{K}}\right) _{m=1}^{\infty }$ (\cite{bh, davie, kaiser,
quef}) suggested an exponential growth but only few years ago very different
results have appeared. In fact, for $m\geq 2$ the most recent estimates for
the optimal values for the constants satisfying (\ref{ul}) show a sublinear
growth:%
\begin{equation}
\begin{tabular}{lll}
$B_{m}^{\mathbb{C}}$ & $<$ & $m^{0.21392},$ and \\
$B_{m}^{\mathbb{R}}$ & $<$ & $1.3\cdot m^{0.36481}.$%
\end{tabular}
\label{77o}
\end{equation}%
More specifically, for complex scalars (see \cite{bps}),
\begin{equation*}
1\leq B_{m}^{\mathbb{C}}\leq \prod_{j=2}^{m}\Gamma \left( 2-\frac{1}{j}%
\right) ^{\frac{j}{2-2j}},
\end{equation*}%
where $\Gamma $ denotes the gamma function. For real scalars (see \cite{bps,
diniz}),%
\begin{equation*}
2^{1-\frac{1}{m}}\leq B_{m}^{\mathbb{R}}\leq \prod_{j=2}^{m}A_{\frac{2j-2}{j}%
}^{-1},
\end{equation*}%
where the constants $A_{p}$ denote the best constants satisfying Khinchine's
inequality (see \cite{Ha}), which are given by
\begin{equation*}
A_{p}:=\sqrt{2}\left( \frac{\Gamma (\frac{p+1}{2})}{\sqrt{\pi }}\right)
^{1/p},
\end{equation*}%
for $p>p_{0}\approx 1.85$ and

\begin{equation*}
A_{p}:=2^{\frac{1}{2}-\frac{1}{p}}
\end{equation*}%
for $p\leq p_{0}\approx 1.85.$ More precisely, the number $p_{0}$ $\in (1,2)$
is the solution of the following equality

\begin{equation*}
\Gamma \left( \frac{p_{0}+1}{2}\right) =\frac{\sqrt{\pi }}{2}.
\end{equation*}%
It is still an open problem, for real scalars, if the optimal constants $%
B_{m}^{\mathbb{R}}$ are $2^{1-\frac{1}{m}}$ or $\prod_{j=2}^{m}A_{\frac{2j-2%
}{j}}^{-1}$ or whether they lie strictly between these bounds. The only
known exact value appears in the real bilinear case, since $2^{1-\frac{1}{2}%
}=B_{2}^{\mathbb{R}}$ (see \cite{diniz})$.$ For the complex case, similar
questions remain open.

\bigskip In \cite{abps} it is proved that the Bohnenblust--Hille inequality
is a very particular case of a large family of sharp inequalities. More
precisely, the following general result was proved in \cite[Theorem 1.1]%
{abps}:

\begin{theorem}[Generalized Bohnenblust--Hille inequality, \protect\cite%
{abps}]
\label{bh_gen} Let $m\geq 2$ be a positive integer and let \linebreak $%
\mathbf{q}:=\left( q_{1},\dots ,q_{m}\right) \in \lbrack 1,2]^{m}.$ The
following assertions are equivalent:

(1) The sequence $\left( q_{1},\dots ,q_{m}\right) $ satisfies
\begin{equation*}
\frac{1}{q_{1}}+\cdots +\frac{1}{q_{m}}\leq \frac{m+1}{2}.
\end{equation*}%
(2) There exists a constant $C_{m,\left( q_{1},\dots ,q_{m}\right) }^{%
\mathbb{K}}\geq 1$ such that
\begin{equation*}
\left( \sum_{j_{1}=1}^{N}\left( \sum_{j_{2}=1}^{N}\left( \cdots \left(
\sum_{j_{m}=1}^{N}\left\vert T(e_{j_{1}},...,e_{j_{m}})\right\vert
^{q_{m}}\right) ^{\frac{q_{m-1}}{q_{m}}}\cdots \right) ^{\frac{q_{2}}{q_{3}}%
}\right) ^{\frac{q_{1}}{q_{2}}}\right) ^{\frac{1}{q_{1}}}\leq C_{m,\mathbf{q}%
}^{\mathbb{K}}\left\Vert T\right\Vert
\end{equation*}%
for all continuous $m$-linear forms $T:c_{0}\times \cdots \times
c_{0}\rightarrow \mathbb{K}$, and every positive integer $N$.
\end{theorem}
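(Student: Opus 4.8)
\emph{The plan is} to prove the two implications separately. For $(2)\Rightarrow(1)$ (necessity of the exponent condition) I would test the inequality on a sequence of extremal forms produced by the Kahane--Salem--Zygmund inequality, and for $(1)\Rightarrow(2)$ (sufficiency) I would reduce the general exponent to the critical endpoints $\left( 1,2,\dots ,2\right) ,\dots ,\left( 2,\dots ,2,1\right) $ furnished by the mixed $\left( \ell _{1},\ell _{2}\right) $-Littlewood inequalities \eqref{0009} and then interpolate.

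For $(2)\Rightarrow(1)$, the Kahane--Salem--Zygmund inequality provides, for each $N$, a choice of signs $\left( \varepsilon _{j_{1}\dots j_{m}}\right) _{j_{1},\dots ,j_{m}=1}^{N}$ such that the $m$-linear form
\[
T_{N}(x^{(1)},\dots ,x^{(m)})=\sum_{j_{1},\dots ,j_{m}=1}^{N}\varepsilon _{j_{1}\dots j_{m}}\,x_{j_{1}}^{(1)}\cdots x_{j_{m}}^{(m)}
\]
satisfies $\left\Vert T_{N}\right\Vert \leq \kappa _{m}\,N^{(m+1)/2}$ for some $\kappa _{m}$ depending only on $m$, while $\left\vert T_{N}(e_{j_{1}},\dots ,e_{j_{m}})\right\vert =1$ for all indices. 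Inserting $T_{N}$ into the inequality in $(2)$, the left-hand side equals exactly $N^{\,1/q_{1}+\cdots +1/q_{m}}$, so that $N^{\,1/q_{1}+\cdots +1/q_{m}}\leq C_{m,\mathbf{q}}^{\mathbb{K}}\,\kappa _{m}\,N^{(m+1)/2}$ for every $N$; letting $N\rightarrow \infty $ forces $1/q_{1}+\cdots +1/q_{m}\leq (m+1)/2$, which is $(1)$.

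For $(1)\Rightarrow(2)$, I would first reduce to the critical case $1/q_{1}+\cdots +1/q_{m}=(m+1)/2$. Since the iterated mixed norm on the left-hand side is non-increasing in each exponent $q_{k}$ (because $\left\Vert \cdot \right\Vert _{\ell _{q}}\leq \left\Vert \cdot \right\Vert _{\ell _{p}}$ whenever $p\leq q$, applied at each level), and since any subcritical $\mathbf{q}\in \lbrack 1,2]^{m}$ admits a critical $\mathbf{q}^{\prime }\in \lbrack 1,2]^{m}$ with $q_{k}^{\prime }\leq q_{k}$ for all $k$ (lower the entries $q_{k}$, keeping them $\geq 1$, until the sum of reciprocals reaches $(m+1)/2$), it suffices to treat critical exponents. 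In $1/q_{k}$-coordinates the critical region is the polytope $P=\{x\in \lbrack 1/2,1]^{m}:\sum_{k}x_{k}=(m+1)/2\}$; a direct vertex computation shows that its only extreme points are the $m$ points with one coordinate equal to $1$ and the remaining $m-1$ equal to $1/2$, i.e.\ exactly the exponents $\left( 1,2,\dots ,2\right) ,\dots ,\left( 2,\dots ,2,1\right) $ of \eqref{87}. Each of these endpoint inequalities is the mixed $\left( \ell _{1},\ell _{2}\right) $-Littlewood inequality \eqref{0009}, valid with constant $\left( \sqrt{2}\right) ^{m-1}$ (respectively $\left( 2/\sqrt{\pi }\right) ^{m-1}$ for complex scalars). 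Writing an arbitrary critical $\mathbf{q}$ as a convex combination $1/q_{k}=\sum_{i=1}^{m}\theta _{i}\,(v_{i})_{k}$ of these vertices $v_{i}$, a mixed-norm interpolation argument then yields the inequality for $\mathbf{q}$ with constant $\prod_{i}\big( (\sqrt{2})^{m-1}\big) ^{\theta _{i}}=(\sqrt{2})^{m-1}$.

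\emph{The main obstacle} is the interpolation step in $(1)\Rightarrow(2)$: one must show that the nested mixed-norm inequality behaves multiplicatively under convex interpolation of the exponent vectors, tracking how the successive Hölder inequalities combine across all $m$ levels so that the interpolated constant is genuinely $\prod_{i}C_{v_{i}}^{\theta _{i}}$ and not larger. This is precisely where the interpolative machinery of \cite[Section~2]{abps} is needed. The supporting reductions (monotonicity of the mixed norm in each $q_{k}$ and the determination of the vertices of $P$) are elementary but must be arranged carefully so that the convex-combination hypothesis of the interpolation lemma is met; by contrast, the necessity direction is a clean application of the Kahane--Salem--Zygmund inequality.
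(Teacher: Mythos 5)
The paper states Theorem \ref{bh_gen} only as a citation of \cite[Theorem 1.1]{abps} and contains no proof of it, but your argument is correct and is essentially the argument of that reference, which the introduction of this paper itself sketches: the Kahane--Salem--Zygmund inequality for $(2)\Rightarrow(1)$, and, for $(1)\Rightarrow(2)$, monotone reduction to the critical hyperplane followed by convex interpolation of the mixed $(\ell_1,\ell_2)$-Littlewood endpoints $(1,2,\dots,2),\dots,(2,\dots,2,1)$ of \eqref{87} via the mixed-norm H\"older/interpolation lemma of \cite[Section 2]{abps} (the same machinery reused in Proposition \ref{up}). Your supporting reductions (monotonicity of the nested norm in each $q_k$ and the identification of the extreme points of the critical polytope) are correct, so there is nothing to object to.
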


Observe that the Bohnenblust--Hille inequality is curiously the particular
case%
\begin{equation*}
q_{1}=q_{2}=...=q_{m}=\frac{2m}{m+1}.
\end{equation*}%
In the recent years, some works have provided optimal estimates for some
particular constants $C_{m,\mathbf{q}}^{\mathbb{R}}$. For instance, $C_{2,%
\mathbf{q}}^{\mathbb{R}}=2^{\frac{1}{2}}$, (see \cite{abps}) and $C_{m,%
\mathbf{q}}^{\mathbb{R}}=2^{\frac{m-1}{2}}$, for $\mathbf{q}=\left(
1,2,...,2\right) $ and all $m\geq 2$, (see \cite{natal}).

\bigskip

This paper is organized as follows. In Section 2 and Section 3 we state and
prove our first main results related to the mixed $\left( \ell _{1},\ell
_{2}\right) $-Littlewood inequalities. In Section 4 we prove some estimates
for the upper bounds of the generalized Bohnenblust--Hille inequality and
the results of the sections 2,3,4 are used in the final section to obtain
sharp estimates for the generalized Bohnenblust--Hille inequality for
certain $3$-linear forms.

\section{First main result}

\bigskip For $m\geq 2$ and $1\leq i\leq m$, we define
\begin{equation*}
\widehat{q}_{i,m}:=\frac{q_{1}q_{2}...q_{m}}{q_{i}}.
\end{equation*}%
Our first main result is the following theorem that extends the main result
of \cite{natal}, as we shall see in (\ref{gggg7}) and (\ref{gggg8}):

\begin{theorem}
\label{general} If\textit{\ }$m\geq 2$ is a positive integer, \textit{and} $%
\mathbf{q}:=(q_{1},...,q_{m})\in \left[ 1,2\right] ^{m}$ are \textit{such
that}
\begin{equation*}
\frac{1}{q_{1}}+\cdots +\frac{1}{q_{m}}=\frac{m+1}{2},
\end{equation*}%
t\textit{hen there exists a constant}
\begin{equation}
C_{m,\mathbf{q}}^{\mathbb{R}}\geq 2^{\frac{\left( m-1\right) \widehat{q}%
_{1,m}+\left( \sum_{i=2}^{m}\widehat{q}_{i,m}\right) -\left( m-1\right)
q_{1}q_{2}...q_{m}}{q_{1}q_{2}...q_{m}}}  \label{lower}
\end{equation}%
\textit{such that}
\begin{equation*}
\left( \sum_{j_{1}=1}^{n}\left( \sum_{j_{2}=1}^{n}\left( \cdots \left(
\sum_{j_{m}=1}^{n}\left\vert T(e_{j_{1}},...,e_{j_{m}})\right\vert
^{q_{m}}\right) ^{\frac{q_{m-1}}{q_{m}}}\cdots \right) ^{\frac{q_{2}}{q_{3}}%
}\right) ^{\frac{q_{1}}{q_{2}}}\right) ^{\frac{1}{q_{1}}}\leq C_{m,\mathbf{q}%
}^{\mathbb{R}}\left\Vert T\right\Vert
\end{equation*}%
\textit{for all continuous }$m$\textit{--linear forms }$T:c_{0}\times \cdots
\times c_{0}\rightarrow \mathbb{R}$\textit{\ and all positive integers }$n.$
\end{theorem}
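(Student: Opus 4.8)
The statement has two layers, and only the second carries content. That some finite admissible constant exists is immediate from Theorem \ref{bh_gen}: the hypothesis $\frac{1}{q_{1}}+\cdots+\frac{1}{q_{m}}=\frac{m+1}{2}$ is the equality instance of condition (1), so condition (2) already supplies a finite $C_{m,\mathbf{q}}^{\mathbb{R}}$. The real assertion is the lower estimate (\ref{lower}), i.e.\ that the \emph{best} admissible constant cannot be smaller than $2^{E}$, where $E:=\frac{(m-1)\widehat{q}_{1,m}+\sum_{i=2}^{m}\widehat{q}_{i,m}-(m-1)q_{1}\cdots q_{m}}{q_{1}\cdots q_{m}}$. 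The plan is therefore to exhibit, for a suitable $n$, an explicit $m$-linear form $T$ for which the left-hand side of the asserted inequality already equals $2^{E}\left\Vert T\right\Vert $; this forces every admissible constant to be at least $2^{E}$.

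For the extremal form I would generalize the construction behind the $(1,2,\dots,2)$ case of \cite{natal}: place the first variable in a higher-dimensional space and the remaining $m-1$ variables in $\mathbb{R}^{2}$, and take the coefficients to be signs of Hadamard / Kahane--Salem--Zygmund type, chosen so that cancellation keeps the supremum norm $\left\Vert T\right\Vert $ small, with the coefficient moduli and the first-variable dimension tuned to $q_{1}$. The pleasant point is that the mixed norm then factorizes through the nested structure: computing the innermost blocks (over $j_{2},\dots ,j_{m}$) first, the exponents $q_{2},\dots ,q_{m}$ enter only through $\sum_{i=2}^{m}\frac{1}{q_{i}}$, which the constraint rewrites as $\frac{m+1}{2}-\frac{1}{q_{1}}$. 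Hence the inner contribution collapses to a single power of $2$ multiplying an outer $\ell_{q_{1}}$-aggregation over the first index, and only $q_{1}$ survives; the ratio of the mixed norm to $\left\Vert T\right\Vert $ then comes out as $2^{E}$ after the normalization is fixed.

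The main obstacle is exactly this supremum-norm estimate. One must show that the chosen sign pattern produces enough cancellation that $\left\Vert T\right\Vert $ is as small as the average-square (energy) lower bound permits, i.e.\ that the form is nearly flat on the vertices of the polydisc. A clean global flatness with unimodular coefficients is \emph{not} available in general (already for $m=3$ a dimension count rules it out), which is why the construction must depend on $q_{1}$ through the moduli and the first-variable dimension; verifying the resulting norm bound is where the work concentrates, and is also why the argument does not obviously extend to moving the exponent $1$ to other positions. A secondary, purely bookkeeping step is the algebra: one checks that the exponent produced, namely $\frac{m-2}{q_{1}}+\frac{3-m}{2}$ after using $\sum_{i}\frac{1}{q_{i}}=\frac{m+1}{2}$, coincides with the expression in (\ref{lower}), and that specializing to $\mathbf{q}=(1,2,\dots,2)$ gives $E=\frac{m-1}{2}$, recovering the value $2^{\frac{m-1}{2}}$ of \cite{natal} referenced after the statement.
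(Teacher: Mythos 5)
Your high-level strategy coincides with the paper's: exhibit one explicit $m$-linear form, compute its mixed $\mathbf{q}$-norm exactly, control its supremum norm, and divide. Your bookkeeping is also correct (the exponent in (\ref{lower}) equals $\frac{m-2}{q_{1}}+\frac{3-m}{2}$ and reduces to $\frac{m-1}{2}$ at $\mathbf{q}=(1,2,\dots,2)$). But the proposal stops exactly where the proof has to begin: no extremal form is actually written down, and you yourself concede that ``verifying the resulting norm bound is where the work concentrates.'' Since the whole content of Theorem \ref{general} is that verification, what you have is a plan rather than a proof. The paper avoids the difficulty by constructing nothing new: it takes the recursive forms $T_{m}$ of \cite{diniz} (built from $T_{2}$ by a doubling/backward-shift scheme on $\ell_{\infty}^{2^{m-1}}\times\cdots\times\ell_{\infty}^{2^{m-1}}$), for which $\left\Vert T_{m}\right\Vert =2^{m-1}$ is already known, and proves by induction on $m$ that their mixed norm equals $2^{\frac{m-1}{q_{1}}+\sum_{i=2}^{m}\frac{1}{q_{i}}}$; dividing gives (\ref{lower}).

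Two of your structural guesses would also have steered you wrong. First, nothing needs to be ``tuned to $q_{1}$'': a single fixed form, independent of $\mathbf{q}$, works for every admissible $\mathbf{q}$ simultaneously, because for unimodular coefficients whose first index ranges over $2^{m-1}$ values the mixed norm automatically factors as $2^{\frac{m-1}{q_{1}}}\cdot 2^{\sum_{i\geq 2}\frac{1}{q_{i}}}$. Second, your claim that flatness with unimodular coefficients is impossible already for $m=3$ is misplaced for the very configuration you propose: on $\ell_{\infty}^{2^{m-1}}\times\ell_{\infty}^{2}\times\cdots\times\ell_{\infty}^{2}$, the form whose coefficient matrix is $H_{2}^{\otimes(m-1)}$ (rows indexed by $j_{1}$, columns by the multi-index $(j_{2},\dots,j_{m})$) satisfies $\left\Vert T\right\Vert =2^{m-1}=\bigl(\sum\left\vert a\right\vert ^{2}\bigr)^{1/2}$, since a product sign vector $\epsilon^{(2)}\otimes\cdots\otimes\epsilon^{(m)}$ is sent by $H_{2}^{\otimes(m-1)}$ to $\pm 2^{m-1}$ times a canonical vector, and the supremum of a multilinear form over the polydisc is attained at sign vectors. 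So your sketch could in fact be completed into a correct (and arguably shorter) argument along these lines, but the decisive steps --- the explicit coefficients and the equality $\left\Vert T\right\Vert =2^{m-1}$ --- are precisely the ones left open, and the obstruction you cite concerns the full $N\times\cdots\times N$ support, not the thin support your own construction uses.
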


\begin{proof}
\bigskip In \cite{diniz}, for all positive integers $m\geq 2$, the $m$%
-linear forms $T_{m}$ are inductively defined as
\begin{equation*}
\begin{tabular}{llll}
$T_{2}:$ & $\ell _{\infty }^{2}\times \ell _{\infty }^{2}$ & $\rightarrow $
& $\mathbb{R}$ \\
& $\left( x^{\left( 1\right) },x^{\left( 2\right) }\right) $ & $\mapsto $ & $%
x_{1}^{\left( 1\right) }x_{1}^{\left( 2\right) }+x_{1}^{\left( 1\right)
}x_{2}^{\left( 2\right) }+x_{2}^{\left( 1\right) }x_{1}^{\left( 2\right)
}-x_{2}^{\left( 1\right) }x_{2}^{\left( 2\right) }$%
\end{tabular}%
\end{equation*}%
and%
\begin{equation*}
\begin{tabular}{llll}
$T_{m}:$ & $\ell _{\infty }^{2^{m-1}}\times \cdots \times \ell _{\infty
}^{2^{m-1}}$ & $\rightarrow $ & $\mathbb{R}$ \\
& $\left( x^{\left( 1\right) },...,x^{\left( m\right) }\right) $ & $\mapsto $
& $\left( x_{1}^{\left( m\right) }+x_{2}^{\left( m\right) }\right)
T_{m-1}\left( x^{\left( 1\right) },...,x^{\left( m-1\right) }\right) $ \\
&  &  & $+\left( x_{1}^{\left( m\right) }-x_{2}^{\left( m\right) }\right)
T_{m-1}\left( \widehat{x}^{\left( 1\right) },\widehat{x}^{\left( 2\right)
},...,\widehat{x}^{\left( m-1\right) }\right) ,$%
\end{tabular}%
\end{equation*}%
where, for $1\leq k\leq m$,
\begin{equation*}
x^{\left( k\right) }=\left( x_{j}^{\left( k\right) }\right)
_{j=1}^{2^{m-1}}\in \ell _{\infty }^{2^{m-1}},
\end{equation*}
and%
\begin{eqnarray*}
\widehat{x}^{\left( 1\right) } &=&B^{2^{m-2}}\left( x^{\left( 1\right)
}\right) , \\
\widehat{x}^{\left( 2\right) } &=&B^{2^{m-2}}\left( x^{\left( 2\right)
}\right) , \\
\widehat{x}^{\left( 3\right) } &=&B^{2^{m-3}}\left( x^{\left( 3\right)
}\right) , \\
\widehat{x}^{\left( 4\right) } &=&B^{2^{m-4}}\left( x^{\left( 4\right)
}\right) , \\
&&\vdots \\
\widehat{x}^{\left( m-2\right) } &=&B^{2^{2}}\left( x^{\left( m-2\right)
}\right) , \\
\widehat{x}^{\left( m-1\right) } &=&B^{2}\left( x^{\left( m-1\right)
}\right) ,
\end{eqnarray*}%
where $B$ is the backward shift operator in $\ell _{p}^{2^{m-1}}.$ As a
matter of fact, we can observe that the domain of $T_{m}$ can be chosen as $%
\ell _{\infty }^{2^{m-1}}\times \ell _{\infty }^{2^{m-1}}\times \ell
_{\infty }^{2^{m-2}}\times \ell _{\infty }^{2^{m-3}}\times \cdots \times
\ell _{\infty }^{2^{2}}\times \ell _{\infty }^{2}$.

Let us see that for $m\geq 2$ and $1\leq i\leq m$ we have%
\begin{equation}
\left( \sum_{j_{1}=1}^{2^{m-1}}\left( \sum_{j_{2}=1}^{2^{m-1}}\left( \cdots
\left( \sum_{j_{m}=1}^{2^{m-1}}\left\vert
T_{m}(e_{j_{1}},...,e_{j_{m}})\right\vert ^{q_{m}}\right) ^{\frac{q_{m-1}}{%
q_{m}}}\cdots \right) ^{\frac{q_{2}}{q_{3}}}\right) ^{\frac{q_{1}}{q_{2}}%
}\right) ^{\frac{1}{q_{1}}}=2^{\frac{\left( m-1\right) \widehat{q}%
_{1,m}+\sum_{i=2}^{m}\widehat{q}_{i,m}}{q_{1}q_{2}...q_{m}}}.  \label{pppp0}
\end{equation}%
In fact, for $m=2$ it is immediate since\
\begin{equation*}
\left( \sum_{j_{1}=1}^{2}\left( \sum_{j_{2}=1}^{2}\left\vert
T_{2}(e_{j_{1}},e_{j_{2}})\right\vert ^{q_{2}}\right) ^{\frac{q_{1}}{q_{2}}%
}\right) ^{\frac{1}{q_{1}}}=2^{\frac{1}{q_{1}}+\frac{1}{q_{2}}}=2^{\frac{%
q_{2}+q_{1}}{q_{1}q_{2}}}.
\end{equation*}%
Let us prove by induction. Suppose that it is valid for $m-1$ and let us
prove for $m.$ In other words we shall prove that if
\begin{eqnarray*}
&&\left( \sum_{j_{1}=1}^{2^{m-2}}\left( \sum_{j_{2}=1}^{2^{m-2}}\left(
\cdots \left( \sum_{j_{m-1}=1}^{2^{m-2}}\left\vert
T_{m-1}(e_{j_{1}},...,e_{j_{m-1}})\right\vert ^{q_{m-1}}\right) ^{\frac{%
q_{m-2}}{q_{m-1}}}\cdots \right) ^{\frac{q_{2}}{q_{3}}}\right) ^{\frac{q_{1}%
}{q_{2}}}\right) ^{\frac{1}{q_{1}}} \\
&=&2^{\frac{\left( m-2\right) \widehat{q}_{1,m-1}+\sum_{i=2}^{m-1}\widehat{q}%
_{i,m-1}}{q_{1}q_{2}...q_{m-1}}}
\end{eqnarray*}%
is valid, then
\begin{eqnarray*}
&&\left( \sum_{j_{1}=1}^{2^{m-1}}\left( \sum_{j_{2}=1}^{2^{m-1}}\left(
\cdots \left( \sum_{j_{m}=1}^{2^{m-1}}\left\vert
T_{m}(e_{j_{1}},...,e_{j_{m}})\right\vert ^{q_{m}}\right) ^{\frac{q_{m-1}}{%
q_{m}}}\cdots \right) ^{\frac{q_{2}}{q_{3}}}\right) ^{\frac{q_{1}}{q_{2}}%
}\right) ^{\frac{1}{q_{1}}} \\
&=&2^{\frac{\left( m-1\right) \widehat{q}_{1,m}+\sum_{i=2}^{m}\widehat{q}%
_{i,m}}{q_{1}q_{2}...q_{m}}}.
\end{eqnarray*}%
Note that \
\begin{eqnarray*}
&&\sum_{j_{m-1}=1}^{2^{m-1}}\left( \sum_{j_{m}=1}^{2^{m-1}}\left\vert
T_{m}(e_{j_{1}},...,e_{j_{m}})\right\vert ^{q_{m}}\right) ^{\frac{q_{m-1}}{%
q_{m}}} \\
&=&\sum_{j_{m-1}=1}^{2^{m-1}}\left( \left\vert
T_{m}(e_{j_{1}},...,e_{j_{m-1}},e_{1})\right\vert ^{q_{m}}+\left\vert
T_{m}\left( e_{j_{1}},...,e_{j_{m-1}},e_{2}\right) \right\vert
^{q_{m}}\right) ^{\frac{q_{m-1}}{q_{m}}} \\
&=&\sum_{j_{m-1}=1}^{2^{m-1}}\left(
\begin{array}{c}
\left\vert T_{m-1}(e_{j_{1}},...,e_{j_{m-1}})+T_{m-1}(B^{2^{m-2}}\left(
e_{j_{1}}\right) ,...,B^{2}\left( e_{j_{m-1}}\right) )\right\vert ^{q_{m}}
\\
+\left\vert T_{m-1}(e_{j_{1}},...,e_{j_{m-1}})-T_{m-1}(B^{2^{m-2}}\left(
e_{j_{1}}\right) ,...,B^{2}\left( e_{j_{m-1}}\right) )\right\vert ^{q_{m}}%
\end{array}%
\right) ^{\frac{q_{m-1}}{q_{m}}} \\
&&%
\begin{array}{c}
=\left(
\begin{array}{c}
\left\vert
T_{m-1}(e_{j_{1}},...,e_{j_{m-2}},e_{1})+T_{m-1}(B^{2^{m-2}}\left(
e_{j_{1}}\right) ,...,B^{2}\left( e_{1}\right) )\right\vert ^{q_{m}} \\
+\left\vert
T_{m-1}(e_{j_{1}},...,e_{j_{m-2}},e_{1})-T_{m-1}(B^{2^{m-2}}\left(
e_{j_{1}}\right) ,...,B^{2}\left( e_{1}\right) )\right\vert ^{q_{m}}%
\end{array}%
\right) ^{\frac{q_{m-1}}{q_{m}}} \\
+\left(
\begin{array}{c}
\left\vert
T_{m-1}(e_{j_{1}},...,e_{j_{m-2}},e_{2})+T_{m-1}(B^{2^{m-2}}\left(
e_{j_{1}}\right) ,...,B^{2}\left( e_{2}\right) )\right\vert ^{q_{m}} \\
+\left\vert
T_{m-1}(e_{j_{1}},...,e_{j_{m-2}},e_{2})-T_{m-1}(B^{2^{m-2}}\left(
e_{j_{1}}\right) ,...,B^{2}\left( e_{2}\right) )\right\vert ^{q_{m}}%
\end{array}%
\right) ^{\frac{q_{m-1}}{q_{m}}} \\
+\left(
\begin{array}{c}
\left\vert
T_{m-1}(e_{j_{1}},...,e_{j_{m-2}},e_{3})+T_{m-1}(B^{2^{m-2}}\left(
e_{j_{1}}\right) ,...,B^{2}\left( e_{3}\right) )\right\vert ^{q_{m}} \\
+\left\vert
T_{m-1}(e_{j_{1}},...,e_{j_{m-2}},e_{3})-T_{m-1}(B^{2^{m-2}}\left(
e_{j_{1}}\right) ,...,B^{2}\left( e_{3}\right) )\right\vert ^{q_{m}}%
\end{array}%
\right) ^{\frac{q_{m-1}}{q_{m}}} \\
+\left(
\begin{array}{c}
\left\vert
T_{m-1}(e_{j_{1}},...,e_{j_{m-2}},e_{4})+T_{m-1}(B^{2^{m-2}}\left(
e_{j_{1}}\right) ,...,B^{2}\left( e_{4}\right) )\right\vert ^{q_{m}} \\
+\left\vert
T_{m-1}(e_{j_{1}},...,e_{j_{m-2}},e_{4})-T_{m-1}(B^{2^{m-2}}\left(
e_{j_{1}}\right) ,...,B^{2}\left( e_{4}\right) )\right\vert ^{q_{m}}%
\end{array}%
\right) ^{\frac{q_{m-1}}{q_{m}}}%
\end{array}%
\end{eqnarray*}

\begin{eqnarray*}
&=&%
\begin{array}{c}
\left( 2\left\vert T_{m-1}(e_{j_{1}},...,e_{j_{m-2}},e_{1})\right\vert
^{q_{m}}\right) ^{\frac{q_{m-1}}{q_{m}}}+\left( 2\left\vert
T_{m-1}(e_{j_{1}},...,e_{j_{m-2}},e_{2})\right\vert ^{q_{m}}\right) ^{\frac{%
q_{m-1}}{q_{m}}}+ \\
\left( 2\left\vert T_{m-1}(B^{2^{m-2}}\left( e_{j_{1}}\right)
,...,B^{2}\left( e_{3}\right) )\right\vert ^{q_{m}}\right) ^{\frac{q_{m-1}}{%
q_{m}}}+\left( 2\left\vert T_{m-1}(B^{2^{m-2}}\left( e_{j_{1}}\right)
,...,B^{2}\left( e_{4}\right) )\right\vert ^{q_{m}}\right) ^{\frac{q_{m-1}}{%
q_{m}}}%
\end{array}
\\
&=&2^{\frac{q_{m-1}}{q_{m}}}\left( \sum_{j_{m-1}=1}^{2}\left\vert
T_{m-1}(e_{j_{1}},...,e_{j_{m-2}},e_{j_{m-1}})\right\vert
^{q_{m-1}}+\sum_{j_{m-1}=3}^{4}\left\vert T_{m-1}(B^{2^{m-2}}\left(
e_{j_{1}}\right) ,...,B^{2}\left( e_{j_{m-1}}\right) )\right\vert
^{q_{m-1}}\right) .
\end{eqnarray*}%
By making
\begin{equation*}
\sum_{j_{m-1}=1}^{2}\left\vert
T_{m-1}(e_{j_{1}},...,e_{j_{m-2}},e_{j_{m-1}})\right\vert ^{q_{m-1}}:=A_{1}
\end{equation*}%
and%
\begin{equation*}
\sum_{j_{m-1}=3}^{4}\left\vert T_{m-1}(B^{2^{m-2}}\left( e_{j_{1}}\right)
,...,B^{2}\left( e_{j_{m-1}}\right) )\right\vert ^{q_{m-1}}:=A_{2},
\end{equation*}%
and using the induction hypothesis it follows that
\begin{eqnarray*}
&&\left( \sum_{j_{1}=1}^{2^{m-1}}\left( \sum_{j_{2}=1}^{2^{m-1}}\left(
\cdots \left( \sum_{j_{m}=1}^{2^{m-1}}\left\vert
T_{m}(e_{j_{1}},...,e_{j_{m}})\right\vert ^{q_{m}}\right) ^{\frac{q_{m-1}}{%
q_{m}}}\cdots \right) ^{\frac{q_{2}}{q_{3}}}\right) ^{\frac{q_{1}}{q_{2}}%
}\right) ^{\frac{1}{q_{1}}} \\
&=&2^{\frac{1}{q_{m}}}\left( \sum_{j_{1}=1}^{2^{m-1}}\left(
\sum_{j_{2}=1}^{2^{m-1}}\left( \cdots \sum_{j_{m-2}=1}^{2^{m-1}}\left(
A_{1}+A_{2}\right) ^{\frac{q_{m-2}}{q_{m-1}}}\cdots \right) ^{\frac{q_{2}}{%
q_{3}}}\right) ^{\frac{q_{1}}{q_{2}}}\right) ^{\frac{1}{q_{1}}} \\
&=&2^{\frac{1}{q_{m}}}\left(
\begin{array}{c}
\sum_{j_{1}=1}^{2^{m-2}}\left( \sum_{j_{2}=1}^{2^{m-1}}\left( \cdots
\sum_{j_{m-2}=1}^{2^{m-1}}\left( A_{1}+A_{2}\right) ^{\frac{q_{m-2}}{q_{m-1}}%
}\cdots \right) ^{\frac{q_{2}}{q_{3}}}\right) ^{\frac{q_{1}}{q_{2}}} \\
+\sum_{j_{1}=2^{m-2}+1}^{2^{m-1}}\left( \sum_{j_{2}=1}^{2^{m-1}}\left(
\cdots \sum_{j_{m-2}=1}^{2^{m-1}}\left( A_{1}+A_{2}\right) ^{\frac{q_{m-2}}{%
q_{m-1}}}\cdots \right) ^{\frac{q_{2}}{q_{3}}}\right) ^{\frac{q_{1}}{q_{2}}}%
\end{array}%
\right) ^{\frac{1}{q_{1}}} \\
&=&2^{\frac{1}{q_{m}}}\left(
\begin{array}{c}
\sum_{j_{1}=1}^{2^{m-2}}\left( \sum_{j_{2}=1}^{2^{m-1}}\left( \cdots
\sum_{j_{m-2}=1}^{2^{m-1}}\left( A_{1}\right) ^{\frac{q_{m-2}}{q_{m-1}}%
}\cdots \right) ^{\frac{q_{2}}{q_{3}}}\right) ^{\frac{q_{1}}{q_{2}}} \\
+\sum_{j_{1}=2^{m-2}+1}^{2^{m-1}}\left( \sum_{j_{2}=1}^{2^{m-1}}\left(
\cdots \sum_{j_{m-2}=1}^{2^{m-1}}\left( A_{2}\right) ^{\frac{q_{m-2}}{q_{m-1}%
}}\cdots \right) ^{\frac{q_{2}}{q_{3}}}\right) ^{\frac{q_{1}}{q_{2}}}%
\end{array}%
\right) ^{\frac{1}{q_{1}}} \\
&=&2^{\frac{1}{q_{m}}}\left( 2\sum_{j_{1}=1}^{2^{m-2}}\left(
\sum_{j_{2}=1}^{2^{m-1}}\left( \cdots \sum_{j_{m-2}=1}^{2^{m-1}}\left(
A_{1}\right) ^{\frac{q_{m-2}}{q_{m-1}}}\cdots \right) ^{\frac{q_{2}}{q_{3}}%
}\right) ^{\frac{q_{1}}{q_{2}}}\right) ^{\frac{1}{q_{1}}} \\
&=&2^{\frac{1}{q_{m}}+\frac{1}{q_{1}}}\left( \sum_{j_{1}=1}^{2^{m-2}}\left(
\sum_{j_{2}=1}^{2^{m-1}}\left( \cdots \sum_{j_{m-2}=1}^{2^{m-1}}\left(
\sum_{j_{m-1}=1}^{2}\left\vert T_{m-1}(e_{j_{1}},...,e_{j_{m-1}})\right\vert
^{q_{m-1}}\right) ^{\frac{q_{m-2}}{q_{m-1}}}\cdots \right) ^{\frac{q_{2}}{%
q_{3}}}\right) ^{\frac{q_{1}}{q_{2}}}\right) ^{\frac{1}{q_{1}}} \\
&=&2^{\frac{1}{q_{m}}+\frac{1}{q_{1}}}2^{\frac{\left( m-2\right) \widehat{q}%
_{1,m-1}+\sum_{i=2}^{m-1}\widehat{q}_{i,m-1}}{q_{1}q_{2}...q_{m-1}}} \\
&=&2^{\frac{\left( m-1\right) \widehat{q}_{1,m}+\sum_{i=2}^{m}\widehat{q}%
_{i,m}}{q_{1}q_{2}...q_{m}}}.
\end{eqnarray*}%
From \cite{diniz} we know that $\left\Vert T_{m}\right\Vert =2^{m-1}$, and
therefore
\begin{equation*}
C_{m,\mathbf{q}}^{\mathbb{R}}\geq \frac{2^{\frac{\left( m-1\right) \widehat{q%
}_{1,m}+\sum_{i=2}^{m}\widehat{q}_{i,m}}{q_{1}q_{2}...q_{m}}}}{2^{m-1}}\geq
2^{\frac{\left( m-1\right) \widehat{q}_{1,m}+\left( \sum_{i=2}^{m}\widehat{q}%
_{i,m}\right) -\left( m-1\right) q_{1}q_{2}...q_{m}}{q_{1}q_{2}...q_{m}}},
\end{equation*}%
and the proof is done.
\end{proof}

\bigskip

Note that when $q_{1}=...=q_{m}=\frac{2m}{m+1}$, it follows that for all $%
i\in \left\{ 1,...,m\right\} $,

\begin{equation*}
\widehat{q}_{i,m}=\left( \frac{2m}{m+1}\right) ^{m-1}
\end{equation*}%
and thus, when $q_{1}=...=q_{m}=\frac{2m}{m+1},$ the inequality (\ref{lower}%
) recovers
\begin{eqnarray}
C_{m,\mathbf{q}}^{\mathbb{R}} &\geq &2^{\frac{\left( m-1\right) \left( \frac{%
2m}{m+1}\right) ^{m-1}+\left( \sum_{i=2}^{m}\left( \frac{2m}{m+1}\right)
^{m-1}\right) -\left( m-1\right) \left( \frac{2m}{m+1}\right) ^{m}}{\left(
\frac{2m}{m+1}\right) ^{m}}}  \label{gggg6} \\
&=&2^{\frac{\left( m-1\right) \left( \frac{2m}{m+1}\right) ^{m-1}+\left(
m-1\right) \left( \frac{2m}{m+1}\right) ^{m-1}-\left( m-1\right) \left(
\frac{2m}{m+1}\right) ^{m}}{\left( \frac{2m}{m+1}\right) ^{m}}}  \notag \\
&=&2^{\frac{m-1}{m}}  \notag
\end{eqnarray}%
that is precisely the lower estimate from \cite{diniz}. Besides, for \ $%
\mathbf{q}=(\alpha ,\beta _{m},...,\beta _{m})$, we have
\begin{eqnarray}
C_{m,\mathbf{q}}^{\mathbb{R}} &\geq &2^{\frac{\left( m-1\right) \beta
_{m}^{m-1}+\left( \sum_{i=2}^{m}\alpha \beta _{m}^{m-2}\right) -\left(
m-1\right) \alpha \beta _{m}^{m-1}}{\alpha \beta ^{m-1}}}  \label{gggg7} \\
&=&2^{\frac{\left( \alpha \beta _{m}^{m-2}+\beta _{m}^{m-1}\right) \left(
m-1\right) -\left( m-1\right) \alpha \beta _{m}^{m-1}}{\alpha \beta
_{m}^{m-1}}}  \notag \\
&=&2^{\frac{1}{\alpha \beta _{m}}\left( m-1\right) \left( \alpha +\beta
_{m}-\alpha \beta _{m}\right) }  \notag \\
&=&2^{\frac{1}{2\alpha }\left( 2m+3\alpha -m\alpha -4\right) },  \notag
\end{eqnarray}%
which is the estimate from \ \cite{natal}, if we use $\beta _{m}=\frac{%
2\alpha m-2\alpha }{\alpha m-2+\alpha }$. In particular for \ $\mathbf{q}%
=(1,2,...,2)$, we have

\begin{equation}
C_{m,\mathbf{q}}^{\mathbb{R}}\geq 2^{\frac{m-1}{2}},  \label{gggg8}
\end{equation}%
recovering the main result of \cite{natal}.

\section{Second main result}

\bigskip Our second main result shows that the same estimate obtained in
\cite{natal} also holds for exponents of the type $\left(
2,1,2,2,...,2\right) .$

\bigskip For $m=2$ let us define the bilinear operator $L_{2}$ as $T_{2}$
from the previous section. For $m\geq 3,$ consider

\begin{equation*}
\begin{tabular}{llll}
$L_{m}:$ & $\ell _{\infty }^{2^{m-1}}\times \cdots \times \ell _{\infty
}^{2^{m-1}}$ & $\rightarrow $ & $\mathbb{R}$ \\
& $\left( x^{\left( 1\right) },...,x^{\left( m\right) }\right) $ & $\mapsto $
& $\left( x_{1}^{\left( 1\right) }+x_{2}^{\left( 1\right) }\right)
T_{m-1}\left( x^{\left( 2\right) },...,x^{\left( m\right) }\right) $ \\
&  &  & $+\left( x_{1}^{\left( 1\right) }-x_{2}^{\left( 1\right) }\right)
T_{m-1}\left( \widehat{x}^{\left( 2\right) },\widehat{x}^{\left( 3\right)
},...,\widehat{x}^{\left( m\right) }\right) ,$%
\end{tabular}%
\end{equation*}%
where
\begin{eqnarray*}
\widehat{x}^{\left( 2\right) } &=&B^{2^{m-2}}\left( x^{\left( 2\right)
}\right) , \\
\widehat{x}^{\left( 3\right) } &=&B^{2^{m-2}}\left( x^{\left( 3\right)
}\right) , \\
\widehat{x}^{\left( 4\right) } &=&B^{2^{m-3}}\left( x^{\left( 4\right)
}\right) , \\
\widehat{x}^{\left( 5\right) } &=&B^{2^{m-4}}\left( x^{\left( 5\right)
}\right) , \\
&&\vdots \\
\widehat{x}^{\left( m-1\right) } &=&B^{2^{2}}\left( x^{\left( m-1\right)
}\right) , \\
\widehat{x}^{\left( m\right) } &=&B^{2}\left( x^{\left( m\right) }\right) ,
\end{eqnarray*}%
and $B$ is the backward shift operator in $\ell _{\infty }^{2^{m-1}}$. Using
the previous theorem we get the following:

\begin{theorem}
\label{outro_opera} If\textit{\ }$m\geq 2$ is a positive integer, \textit{and%
} $\mathbf{q}:=(q_{1},...,q_{m})\in \left[ 1,2\right] ^{m}$ is \textit{such
that}
\begin{equation*}
\frac{1}{q_{1}}+\cdots +\frac{1}{q_{m}}=\frac{m+1}{2},
\end{equation*}%
t\textit{hen there exists a constant}
\begin{equation*}
C_{m,\mathbf{q}}^{\mathbb{R}}\geq 2^{\frac{\left( m-1\right) \widehat{q}%
_{2,m}+\left( \sum_{\underset{i\not=2}{i=1}}^{m}\widehat{q}_{i,m}\right)
-\left( m-1\right) q_{1}q_{2}...q_{m}}{q_{1}q_{2}...q_{m}}}
\end{equation*}%
\textit{such that}
\begin{equation*}
\left( \sum_{j_{1}=1}^{n}\left( \sum_{j_{2}=1}^{n}\left( \cdots \left(
\sum_{j_{m}=1}^{n}\left\vert T(e_{j_{1}},...,e_{j_{m}})\right\vert
^{q_{m}}\right) ^{\frac{q_{m-1}}{q_{m}}}\cdots \right) ^{\frac{q_{2}}{q_{3}}%
}\right) ^{\frac{q_{1}}{q_{2}}}\right) ^{\frac{1}{q_{1}}}\leq C_{m,\mathbf{q}%
}^{\mathbb{R}}\left\Vert T\right\Vert
\end{equation*}%
\textit{for all continuous }$m$\textit{--linear forms }$T:c_{0}\times \cdots
\times c_{0}\rightarrow \mathbb{R}$\textit{\ and all positive integers }$n.$
\end{theorem}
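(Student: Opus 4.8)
The plan is to mirror the proof of Theorem~\ref{general}, exploiting that $L_{m}$ is built from $T_{m-1}$ by singling out the \emph{first} variable instead of the last. Concretely, I would first record that $\Vert L_{m}\Vert =2^{m-1}$, and then establish the companion of the key identity (\ref{pppp0}), namely
\begin{equation*}
\left( \sum_{j_{1}=1}^{2^{m-1}}\left( \cdots \left(
\sum_{j_{m}=1}^{2^{m-1}}\left\vert L_{m}(e_{j_{1}},...,e_{j_{m}})\right\vert
^{q_{m}}\right) ^{\frac{q_{m-1}}{q_{m}}}\cdots \right) ^{\frac{q_{1}}{q_{2}}
}\right) ^{\frac{1}{q_{1}}}=2^{\frac{\left( m-1\right) \widehat{q}
_{2,m}+\sum_{i\neq 2}\widehat{q}_{i,m}}{q_{1}q_{2}...q_{m}}}.
\end{equation*}
Dividing this value by $\Vert L_{m}\Vert =2^{m-1}$ then yields the asserted lower bound for $C_{m,\mathbf{q}}^{\mathbb{R}}$; the existence of a finite $C_{m,\mathbf{q}}^{\mathbb{R}}$ is guaranteed by Theorem~\ref{bh_gen} because $\sum_{i}1/q_{i}=(m+1)/2$.

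For the norm I would argue directly: writing $a=T_{m-1}(x^{(2)},\dots ,x^{(m)})$ and $b=T_{m-1}(\widehat{x}^{(2)},\dots ,\widehat{x}^{(m)})$, one has $\vert L_{m}(x)\vert =\vert x_{1}^{(1)}(a+b)+x_{2}^{(1)}(a-b)\vert $, whose supremum over $\vert x_{1}^{(1)}\vert ,\vert x_{2}^{(1)}\vert \leq 1$ equals $\vert a+b\vert +\vert a-b\vert =2\max (\vert a\vert ,\vert b\vert )$. Since backward shifts are contractions and $\Vert T_{m-1}\Vert =2^{m-2}$ (from \cite{diniz}), both $\vert a\vert $ and $\vert b\vert $ are bounded by $2^{m-2}$, and the value $2^{m-2}$ is attained by the extremizer of $T_{m-1}$; hence $\Vert L_{m}\Vert =2^{m-1}$.

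For the identity, the essential observation is that $L_{m}(e_{j_{1}},\dots ,e_{j_{m}})=0$ whenever $j_{1}\geq 3$, while for $j_{1}\in \{1,2\}$
\begin{equation*}
L_{m}(e_{j_{1}},e_{j_{2}},\dots ,e_{j_{m}})=T_{m-1}(e_{j_{2}},\dots
,e_{j_{m}})\pm T_{m-1}(\widehat{e}_{j_{2}},\dots ,\widehat{e}_{j_{m}}),
\end{equation*}
with the sign depending on $j_{1}$. The shift $\widehat{e}_{j_{2}}=B^{2^{m-2}}e_{j_{2}}$ forces the two summands to have disjoint supports in $j_{2}$ (the first is nonzero only for $j_{2}\leq 2^{m-2}$, the second only for $j_{2}>2^{m-2}$), so in absolute value the choices $j_{1}=1$ and $j_{1}=2$ coincide. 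Thus the outer sum over $j_{1}$ contributes exactly a factor $2^{1/q_{1}}$, and the remaining nested sum over $j_{2},\dots ,j_{m}$ splits into its two halves in $j_{2}$. A backward-shift reindexing (the exact analogue of the step identifying the half-sums $A_{1}$ and $A_{2}$ in the proof of Theorem~\ref{general}) shows the second half equals the first, producing a further factor $2^{1/q_{2}}$ and leaving precisely the nested sum of $\vert T_{m-1}(e_{j_{2}},\dots ,e_{j_{m}})\vert $ with exponent tuple $(q_{2},\dots ,q_{m})$. Applying (\ref{pppp0}) to $T_{m-1}$ with these exponents evaluates this to $2^{(m-2)/q_{2}+\sum_{i=3}^{m}1/q_{i}}$, and multiplying the three factors gives the exponent $1/q_{1}+(m-1)/q_{2}+\sum_{i=3}^{m}1/q_{i}$, which is exactly $\big( (m-1)\widehat{q}_{2,m}+\sum_{i\neq 2}\widehat{q}_{i,m}\big) /(q_{1}q_{2}\dots q_{m})$.

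The part needing the most care is the reindexing step: I must verify that summing $\vert T_{m-1}(\widehat{e}_{j_{2}},\dots ,\widehat{e}_{j_{m}})\vert $ over the full ranges of $j_{3},\dots ,j_{m}$, through all the interlaced exponents, returns the same value as the unshifted nested sum. This holds because each backward shift merely permutes the nonzero basis vectors inside the support of $T_{m-1}$ (sending the remaining ones to $0$, which lie outside the support anyway), so the multiset of values $\{\vert T_{m-1}(e_{k_{2}},\dots ,e_{k_{m}})\vert \}$ is preserved level by level; this is the only genuinely delicate bookkeeping, and it is already carried out for $T_{m}$ in Theorem~\ref{general}. I emphasize that (\ref{pppp0}) is used purely as a computational identity about $T_{m-1}$, so no constraint on $(q_{2},\dots ,q_{m})$ beyond membership in $[1,2]$ is required; the hypothesis $\sum_{i}1/q_{i}=(m+1)/2$ enters only to guarantee, via Theorem~\ref{bh_gen}, that a finite constant exists.
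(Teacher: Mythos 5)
Your proposal is correct and follows essentially the same route as the paper: the factor $2^{1/q_1}$ from the two nonzero values of $j_1$, the disjoint-support splitting and reindexing in $j_2$ giving $2^{1/q_2}$, the reduction to the identity (\ref{pppp0}) for $T_{m-1}$ with exponents $(q_2,\dots,q_m)$, and division by $\Vert L_m\Vert=2^{m-1}$. The only addition is your explicit verification that $\Vert L_m\Vert=2^{m-1}$, which the paper simply asserts.
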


\begin{proof}
For $m=2$ the result is encompassed by Theorem \ref{general}. Recall that
for $m\geq 3$, we have

\begin{equation*}
\begin{tabular}{llll}
$L_{m}:$ & $\ell _{\infty }^{2^{m-1}}\times \cdots \times \ell _{\infty
}^{2^{m-1}}$ & $\rightarrow $ & $\mathbb{R}$ \\
& $\left( x^{\left( 1\right) },...,x^{\left( m\right) }\right) $ & $\mapsto $
& $\left( x_{1}^{\left( 1\right) }+x_{2}^{\left( 1\right) }\right)
T_{m-1}\left( x^{\left( 2\right) },...,x^{\left( m\right) }\right) $ \\
&  &  & $+\left( x_{1}^{\left( 1\right) }-x_{2}^{\left( 1\right) }\right)
T_{m-1}\left( B^{2^{m-2}}\left( x^{\left( 2\right) }\right)
,B^{2^{m-2}}\left( x^{\left( 3\right) }\right) ,...,B^{2}\left( x^{\left(
m\right) }\right) \right) ,$%
\end{tabular}%
\end{equation*}%
where
\begin{equation*}
x^{\left( k\right) }=\left( x_{j}^{\left( k\right) }\right)
_{j=1}^{2^{m-1}}\in \ell _{\infty }^{2^{m-1}},
\end{equation*}%
$1\leq k\leq m$, and $B$ is the backward shift operator in $\ell _{\infty
}^{2^{m-1}}.$ We can again realize that we could consider the domain of $%
L_{m}$ as $\ell _{\infty }^{2}\times \ell _{\infty }^{2^{m-1}}\times \ell
_{\infty }^{2^{m-1}}\times \ell _{\infty }^{2^{m-2}}\times \ell _{\infty
}^{2^{m-3}}\times \cdots \times \ell _{\infty }^{2^{2}}$. Note also that%
\begin{equation*}
\left\Vert L_{m}\right\Vert =\left\Vert T_{m}\right\Vert =2^{m-1}.
\end{equation*}

Let us see that for $m\geq 2$ and $1\leq i\leq m$ we have%
\begin{equation*}
\left( \sum_{j_{1}=1}^{2^{m-1}}\left( \sum_{j_{2}=1}^{2^{m-1}}\left( \cdots
\left( \sum_{j_{m}=1}^{2^{m-1}}\left\vert
L_{m}(e_{j_{1}},...,e_{j_{m}})\right\vert ^{q_{m}}\right) ^{\frac{q_{m-1}}{%
q_{m}}}\cdots \right) ^{\frac{q_{2}}{q_{3}}}\right) ^{\frac{q_{1}}{q_{2}}%
}\right) ^{\frac{1}{q_{1}}}=2^{\frac{\left( m-1\right) \widehat{q}%
_{2,m}+\sum_{\underset{i\not=2}{i=1}}^{m}\widehat{q}_{i,m}}{%
q_{1}q_{2}...q_{m}}}.
\end{equation*}

In fact, note that \
\begin{eqnarray*}
&&\left( \sum_{j_{1}=1}^{2^{m-1}}\left( \sum_{j_{2}=1}^{2^{m-1}}\left(
\cdots \left( \sum_{j_{m}=1}^{2^{m-1}}\left\vert
L_{m}(e_{j_{1}},...,e_{j_{m}})\right\vert ^{q_{m}}\right) ^{\frac{q_{m-1}}{%
q_{m}}}\cdots \right) ^{\frac{q_{2}}{q_{3}}}\right) ^{\frac{q_{1}}{q_{2}}%
}\right) ^{\frac{1}{q_{1}}} \\
&=&\left(
\begin{array}{c}
\left( \sum_{j_{2}=1}^{2^{m-1}}\left( \cdots \left(
\sum_{j_{m}=1}^{2^{m-1}}\left\vert
L_{m}(e_{1},e_{j_{2}},...,e_{j_{m}})\right\vert ^{q_{m}}\right) ^{\frac{%
q_{m-1}}{q_{m}}}\cdots \right) ^{\frac{q_{2}}{q_{3}}}\right) ^{\frac{q_{1}}{%
q_{2}}} \\
+\left( \sum_{j_{2}=1}^{2^{m-1}}\left( \cdots \left(
\sum_{j_{m}=1}^{2^{m-1}}\left\vert
L_{m}(e_{2},e_{j_{2}},...,e_{j_{m}})\right\vert ^{q_{m}}\right) ^{\frac{%
q_{m-1}}{q_{m}}}\cdots \right) ^{\frac{q_{2}}{q_{3}}}\right) ^{\frac{q_{1}}{%
q_{2}}}%
\end{array}%
\right) ^{\frac{1}{q_{1}}}
\end{eqnarray*}%
and also that%
\begin{eqnarray*}
&&\left(
\begin{array}{c}
\left( \sum_{j_{2}=1}^{2^{m-1}}\left( \cdots \left(
\sum_{j_{m}=1}^{2^{2}}\left\vert
\begin{array}{c}
T_{m-1}\left( e_{j_{2}},...,e_{j_{m}}\right) \\
+T_{m-1}\left( B^{2^{m-2}}\left( e_{j_{2}}\right) ,B^{2^{m-2}}\left(
e_{j_{3}}\right) ,...,B^{2}\left( e_{j_{m}}\right) \right)%
\end{array}%
\right\vert ^{q_{m}}\right) ^{\frac{q_{m-1}}{q_{m}}}\cdots \right) ^{\frac{%
q_{2}}{q_{3}}}\right) ^{\frac{q_{1}}{q_{2}}} \\
+\left( \sum_{j_{2}=1}^{2^{m-1}}\left( \cdots \left(
\sum_{j_{m}=1}^{2^{2}}\left\vert
\begin{array}{c}
T_{m-1}\left( e_{j_{2}},...,e_{j_{m}}\right) \\
-T_{m-1}\left( B^{2^{m-2}}\left( e_{j_{2}}\right) ,B^{2^{m-2}}\left(
e_{j_{3}}\right) ,...,B^{2}\left( e_{j_{m}}\right) \right)%
\end{array}%
\right\vert ^{q_{m}}\right) ^{\frac{q_{m-1}}{q_{m}}}\cdots \right) ^{\frac{%
q_{2}}{q_{3}}}\right) ^{\frac{q_{1}}{q_{2}}}%
\end{array}%
\right) ^{\frac{1}{q_{1}}} \\
&=&2^{\frac{1}{q_{1}}}\left( \sum_{j_{2}=1}^{2^{m-1}}\left( \cdots
\sum_{j_{m-1}=1}^{2^{3}}\left(
\begin{array}{c}
\left\vert T_{m-1}\left( e_{j_{2}},...,e_{1}\right) \right\vert ^{q_{m}} \\
+\left\vert T_{m-1}\left( e_{j_{2}},...,e_{2}\right) \right\vert ^{q_{m}} \\
+\left\vert T_{m-1}\left( B^{2^{m-2}}\left( e_{j_{2}}\right)
,B^{2^{m-2}}\left( e_{j_{3}}\right) ,...,B^{2}\left( e_{3}\right) \right)
\right\vert ^{q_{m}} \\
+\left\vert T_{m-1}\left( B^{2^{m-2}}\left( e_{j_{2}}\right)
,B^{2^{m-2}}\left( e_{j_{3}}\right) ,...,B^{2}\left( e_{4}\right) \right)
\right\vert ^{q_{m}}%
\end{array}%
\right) ^{\frac{q_{m-1}}{q_{m}}}\cdots \right) ^{\frac{q_{2}}{q_{3}}}\right)
^{\frac{1}{q_{2}}}.
\end{eqnarray*}

Moreover, observe that%
\begin{eqnarray*}
&&T_{m-1}\left( B^{2^{m-2}}\left( e_{j_{2}}\right) ,B^{2^{m-2}}\left(
e_{j_{3}}\right) ,B^{2^{m-3}}\left( e_{j_{4}}\right) ,...,B^{4}\left(
e_{j_{m-1}}\right) ,B^{2}\left( e_{3}\right) \right) \\
&=&T_{m-1}\left(
e_{j_{2}-2^{m-2}},e_{j_{3}-2^{m-2}},e_{j_{4}-2^{m-3}},...,e_{j_{m-1}-4},e_{1}\right)
\end{eqnarray*}%
and%
\begin{eqnarray*}
&&T_{m-1}\left( B^{2^{m-2}}\left( e_{j_{2}}\right) ,B^{2^{m-2}}\left(
e_{j_{3}}\right) ,B^{2^{m-3}}\left( e_{j_{4}}\right) ,...,B^{4}\left(
e_{j_{m-1}}\right) ,B^{2}\left( e_{4}\right) \right) \\
&=&T_{m-1}\left(
e_{j_{2}-2^{m-2}},e_{j_{3}-2^{m-2}},e_{j_{4}-2^{m-3}},...,e_{j_{m-1}-4},e_{2}\right) .
\end{eqnarray*}%
Fir the sake of simplicity, let us define
\begin{equation*}
A_{1}:=\left\vert T_{m-1}\left( e_{j_{2}},...,e_{1}\right) \right\vert
^{q_{m}}+\left\vert T_{m-1}\left( e_{j_{2}},...,e_{2}\right) \right\vert
^{q_{m}}
\end{equation*}%
and%
\begin{eqnarray*}
A_{2} &:&=\left\vert T_{m-1}\left(
e_{j_{2}-2^{m-2}},e_{j_{3}-2^{m-2}},e_{j_{4}-2^{m-3}},...,e_{j_{m-1}-4},e_{1}\right) \right\vert ^{q_{m}}
\\
&&+\left\vert T_{m-1}\left(
e_{j_{2}-2^{m-2}},e_{j_{3}-2^{m-2}},e_{j_{4}-2^{m-3}},...,e_{j_{m-1}-4},e_{2}\right) \right\vert ^{q_{m}}.
\end{eqnarray*}

\bigskip

According to the definition of $T_{m-1}$ we know that $A_{1}$ is non null
only when $j_{m-1}\in \left\{ 1,2,3,4\right\} ,$ and $j_{m-2}\in \left\{
1,2,...,2^{3}\right\} $, ..., $j_{3},j_{2}\in \left\{
1,2,...,2^{m-2}\right\} $. Analogously, $A_{2}$ is non null only when $%
j_{m-1}\in \left\{ 5,6,7,8\right\} ,$ and $j_{m-2}\in \left\{
9,2,...,2^{4}\right\} $, ..., $j_{3},j_{2}\in \left\{
2^{m-2}+1,...,2^{m-1}\right\} $.

Therefore%
\begin{eqnarray*}
&&\sum_{j_{2}=1}^{2^{m-1}}\left( \cdots \sum_{j_{m-1}=1}^{2^{3}}\left(
\begin{array}{c}
\left\vert T_{m-1}\left( e_{j_{2}},...,e_{1}\right) \right\vert ^{q_{m}} \\
+\left\vert T_{m-1}\left( e_{j_{2}},...,e_{2}\right) \right\vert ^{q_{m}} \\
+\left\vert T_{m-1}\left(
e_{j_{2}-2^{m-2}},e_{j_{3}-2^{m-2}},e_{j_{4}-2^{m-3}},...,e_{j_{m-1}-4},e_{1}\right) \right\vert ^{q_{m}}
\\
+\left\vert T_{m-1}\left(
e_{j_{2}-2^{m-2}},e_{j_{3}-2^{m-2}},e_{j_{4}-2^{m-3}},...,e_{j_{m-1}-4},e_{2}\right) \right\vert ^{q_{m}}%
\end{array}%
\right) ^{\frac{q_{m-1}}{q_{m}}}\cdots \right) ^{\frac{q_{2}}{q_{3}}} \\
&=&\sum_{j_{2}=1}^{2^{m-2}}\left( \cdots \sum_{j_{m-1}=1}^{2^{2}}\left(
\begin{array}{c}
\left\vert T_{m-1}\left( e_{j_{2}},...,e_{1}\right) \right\vert ^{q_{m}} \\
+\left\vert T_{m-1}\left( e_{j_{2}},...,e_{2}\right) \right\vert ^{q_{m}}%
\end{array}%
\right) ^{\frac{q_{m-1}}{q_{m}}}\cdots \right) ^{\frac{q_{2}}{q_{3}}} \\
&&+\sum_{j_{2}=2^{m-2}+1}^{2^{m-1}}\left( \cdots
\sum_{j_{m-1}=2^{2}+1}^{2^{3}}\left(
\begin{array}{c}
\left\vert T_{m-1}\left(
e_{j_{2}-2^{m-2}},e_{j_{3}-2^{m-2}},e_{j_{4}-2^{m-3}},...,e_{j_{m-1}-4},e_{1}\right) \right\vert ^{q_{m}}
\\
+\left\vert T_{m-1}\left(
e_{j_{2}-2^{m-2}},e_{j_{3}-2^{m-2}},e_{j_{4}-2^{m-3}},...,e_{j_{m-1}-4},e_{2}\right) \right\vert ^{q_{m}}%
\end{array}%
\right) ^{\frac{q_{m-1}}{q_{m}}}\cdots \right) ^{\frac{q_{2}}{q_{3}}}
\end{eqnarray*}%
and re-writing the indices of the last sum we have%
\begin{eqnarray*}
&=&\sum_{j_{2}=1}^{2^{m-2}}\left( \cdots \sum_{j_{m-1}=1}^{2^{2}}\left(
\begin{array}{c}
\left\vert T_{m-1}\left( e_{j_{2}},...,e_{1}\right) \right\vert ^{q_{m}} \\
+\left\vert T_{m-1}\left( e_{j_{2}},...,e_{2}\right) \right\vert ^{q_{m}}%
\end{array}%
\right) ^{\frac{q_{m-1}}{q_{m}}}\cdots \right) ^{\frac{q_{2}}{q_{3}}} \\
&&+\sum_{j_{2}=2^{m-2}+1}^{2^{m-1}}\left( \cdots
\sum_{j_{m-1}=2^{2}+1}^{2^{3}}\left(
\begin{array}{c}
\left\vert T_{m-1}\left(
e_{j_{2}-2^{m-2}},e_{j_{3}-2^{m-2}},e_{j_{4}-2^{m-3}},...,e_{j_{m-1}-4},e_{1}\right) \right\vert ^{q_{m}}
\\
+\left\vert T_{m-1}\left(
e_{j_{2}-2^{m-2}},e_{j_{3}-2^{m-2}},e_{j_{4}-2^{m-3}},...,e_{j_{m-1}-4},e_{2}\right) \right\vert ^{q_{m}}%
\end{array}%
\right) ^{\frac{q_{m-1}}{q_{m}}}\cdots \right) ^{\frac{q_{2}}{q_{3}}} \\
&=&\sum_{j_{2}=1}^{2^{m-2}}\left( \cdots \sum_{j_{m-1}=1}^{2^{2}}\left(
\begin{array}{c}
\left\vert T_{m-1}\left( e_{j_{2}},...,e_{1}\right) \right\vert ^{q_{m}} \\
+\left\vert T_{m-1}\left( e_{j_{2}},...,e_{2}\right) \right\vert ^{q_{m}}%
\end{array}%
\right) ^{\frac{q_{m-1}}{q_{m}}}\cdots \right) ^{\frac{q_{2}}{q_{3}}} \\
&&+\sum_{j_{2}=1}^{2^{m-2}}\left( \cdots \sum_{j_{m-1}=1}^{2^{2}}\left(
\begin{array}{c}
\left\vert T_{m-1}\left( e_{j_{2}},...,e_{1}\right) \right\vert ^{q_{m}} \\
+\left\vert T_{m-1}\left( e_{j_{2}},...,e_{2}\right) \right\vert ^{q_{m}}%
\end{array}%
\right) ^{\frac{q_{m-1}}{q_{m}}}\cdots \right) ^{\frac{q_{2}}{q_{3}}}
\end{eqnarray*}%
\begin{eqnarray*}
&=&2\left( \sum_{j_{2}=1}^{2^{m-2}}\left( \cdots
\sum_{j_{m-1}=1}^{2^{2}}\left(
\begin{array}{c}
\left\vert T_{m-1}\left( e_{j_{2}},...,e_{1}\right) \right\vert ^{q_{m}} \\
+\left\vert T_{m-1}\left( e_{j_{2}},...,e_{2}\right) \right\vert ^{q_{m}}%
\end{array}%
\right) ^{\frac{q_{m-1}}{q_{m}}}\cdots \right) ^{\frac{q_{2}}{q_{3}}}\right)
\\
&=&2\left( \sum_{j_{2}=1}^{2^{m-2}}\left( \cdots
\sum_{j_{m-1}=1}^{2^{2}}\left( \sum_{j_{m}=1}^{2}\left\vert T_{m-1}\left(
e_{j_{2}},...,e_{j_{m}}\right) \right\vert ^{q_{m}}\right) ^{\frac{q_{m-1}}{%
q_{m}}}\cdots \right) ^{\frac{q_{2}}{q_{3}}}\right) \\
&=&2\left( 2^{\frac{\left( m-2\right) \frac{\hat{q}_{2,m}}{q_{1}}%
+\sum_{i=3}^{m}\frac{\hat{q}_{i,m}}{q_{1}}}{q_{2}...q_{m}}}\right) ^{q_{2}}.
\end{eqnarray*}%
Hence
\begin{eqnarray*}
&&2^{\frac{1}{q_{1}}}\left( \sum_{j_{2}=1}^{2^{m-1}}\left( \cdots
\sum_{j_{m-1}=1}^{2^{3}}\left(
\begin{array}{c}
\left\vert T_{m-1}\left( e_{j_{2}},...,e_{1}\right) \right\vert ^{q_{m}} \\
+\left\vert T_{m-1}\left( e_{j_{2}},...,e_{2}\right) \right\vert ^{q_{m}} \\
+\left\vert T_{m-1}\left( B^{2^{m-2}}\left( e_{j_{2}}\right)
,B^{2^{m-2}}\left( e_{j_{3}}\right) ,...,B^{2}\left( e_{3}\right) \right)
\right\vert ^{q_{m}} \\
+\left\vert T_{m-1}\left( B^{2^{m-2}}\left( e_{j_{2}}\right)
,B^{2^{m-2}}\left( e_{j_{3}}\right) ,...,B^{2}\left( e_{4}\right) \right)
\right\vert ^{q_{m}}%
\end{array}%
\right) ^{\frac{q_{m-1}}{q_{m}}}\cdots \right) ^{\frac{q_{2}}{q_{3}}}\right)
^{\frac{1}{q_{2}}} \\
&=&2^{\frac{1}{q_{1}}}\left( 2\left( 2^{\frac{\left( m-2\right) \frac{\hat{q}%
_{2,m}}{q_{1}}+\sum_{i=3}^{m}\frac{\hat{q}_{i,m}}{q_{1}}}{q_{2}...q_{m}}%
}\right) ^{q_{2}}\right) ^{\frac{1}{q_{2}}} \\
&=&2^{\frac{1}{q_{1}}}2^{\frac{1}{q_{2}}}2^{\frac{\left( m-2\right) \frac{%
\hat{q}_{2,m}}{q_{1}}+\sum_{i=3}^{m}\frac{\hat{q}_{i,m}}{q_{1}}}{%
q_{2}...q_{m}}}.
\end{eqnarray*}

Finally, since $\left\Vert L_{m}\right\Vert =2^{m-1}$, we have
\begin{equation*}
C_{m,\mathbf{q}}^{\mathbb{R}}\geq \frac{2^{\frac{\left( m-1\right) \hat{q}%
_{2,m}+\left( \sum_{\underset{i\not=2}{i=1}}^{m}\widehat{q}_{i,m}\right) }{%
q_{1}q_{2}...q_{m}}}}{2^{m-1}}=2^{\left( \left( m-1\right) \hat{q}%
_{2,m}+\left( \sum_{\underset{i\not=2}{i=1}}^{m}\widehat{q}_{i,m}\right)
-\left( m-1\right) q_{1}q_{2}...q_{m}\right) \left(
q_{1}q_{2}...q_{m}\right) ^{-1}}.
\end{equation*}
\end{proof}

\begin{corollary}
The optimal constants of the mixed $\left( \ell _{1},\ell _{2}\right) $%
-Littlewood-type inequality for $\mathbf{q=}\left( 2,1,2,...,2\right) $ is $%
C_{m,\left( 2,1,2,...,2\right) }^{\mathbb{R}}=2^{\frac{m-1}{2}}.$
\end{corollary}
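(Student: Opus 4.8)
The plan is to prove the corollary by establishing two matching inequalities. The upper bound $C_{m,(2,1,2,\ldots,2)}^{\mathbb{R}}\le 2^{\frac{m-1}{2}}$ will be read off from the mixed $(\ell_1,\ell_2)$-Littlewood inequality stated in the Introduction, and the lower bound $C_{m,(2,1,2,\ldots,2)}^{\mathbb{R}}\ge 2^{\frac{m-1}{2}}$ from the estimate of Theorem~\ref{outro_opera} specialized to $\mathbf{q}=(2,1,2,\ldots,2)$.

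First I would check that $\mathbf{q}=(2,1,2,\ldots,2)$ is admissible: $\frac1{q_1}+\cdots+\frac1{q_m}=\frac12+1+(m-2)\frac12=\frac{m+1}2$, so both Theorem~\ref{bh_gen} and Theorem~\ref{outro_opera} apply. For the upper estimate I would unfold the nested norm of Theorem~\ref{outro_opera} for this $\mathbf{q}$. With $q_1=2$, $q_2=1$ and $q_3=\cdots=q_m=2$, all the intermediate exponent ratios $q_i/q_{i+1}$ with $3\le i\le m-1$ equal $1$, so the $m-2$ innermost summations over $j_3,\ldots,j_m$ merge into a single $\ell_2$-sum; the ratio $q_2/q_3=\frac12$ then supplies its $\frac12$ power, the ratio $q_1/q_2=2$ the outer square, and $\frac1{q_1}=\frac12$ the final root. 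The norm therefore equals
\[
\left(\sum_{j_1}\left(\sum_{j_2}\left(\sum_{j_3,\ldots,j_m}|T(e_{j_1},\ldots,e_{j_m})|^2\right)^{\frac12}\right)^{2}\right)^{\frac12},
\]
which is exactly the $k=2$ instance of (\ref{0009}). Since the constant there is $(\sqrt2)^{m-1}=2^{\frac{m-1}2}$, this gives $C_{m,(2,1,2,\ldots,2)}^{\mathbb{R}}\le 2^{\frac{m-1}2}$.

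For the matching lower bound I would substitute $\mathbf{q}=(2,1,2,\ldots,2)$ into the exponent of Theorem~\ref{outro_opera}. Here $q_1q_2\cdots q_m=2^{m-1}$, hence $\widehat{q}_{2,m}=2^{m-1}$ while $\widehat{q}_{i,m}=2^{m-2}$ for every $i\neq2$, so that $\sum_{i\neq2}\widehat{q}_{i,m}=(m-1)2^{m-2}$. Plugging these in, the exponent collapses:
\[
\frac{(m-1)2^{m-1}+(m-1)2^{m-2}-(m-1)2^{m-1}}{2^{m-1}}=\frac{(m-1)2^{m-2}}{2^{m-1}}=\frac{m-1}2,
\]
so $C_{m,(2,1,2,\ldots,2)}^{\mathbb{R}}\ge 2^{\frac{m-1}2}$. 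Combining the two estimates yields $C_{m,(2,1,2,\ldots,2)}^{\mathbb{R}}=2^{\frac{m-1}2}$.

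Both halves reduce to short computations, so I do not foresee a genuine difficulty; the only step needing care is the bookkeeping identifying the generalized norm for $\mathbf{q}=(2,1,2,\ldots,2)$ with the $k=2$ form of (\ref{0009}) --- that is, confirming that the unit exponent ratios in the middle of the cascade truly merge the inner summations into a single $\ell_2$-block before the Littlewood inequality is applied.
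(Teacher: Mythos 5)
Your proposal is correct and follows essentially the same route as the paper: the lower bound is obtained by the identical substitution of $\mathbf{q}=(2,1,2,\ldots,2)$ into the exponent of Theorem~\ref{outro_opera} (giving $\widehat{q}_{2,m}=2^{m-1}$, $\widehat{q}_{i,m}=2^{m-2}$ for $i\neq 2$, and exponent $\frac{m-1}{2}$), and the upper bound is the known mixed $(\ell_1,\ell_2)$-Littlewood estimate, which the paper cites from \cite{abps} and you correctly identify with the $k=2$ case of (\ref{0009}). Your careful unfolding of the nested norm to confirm that the unit exponent ratios merge the inner sums into one $\ell_2$-block is a worthwhile verification that the paper leaves implicit, but it does not change the argument.
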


\begin{proof}
We have
\begin{eqnarray*}
C_{m,\mathbf{q}}^{\mathbb{R}} &\geq &2^{\frac{\left( m-1\right) \widehat{q}%
_{2,m}+\left( \sum_{\underset{i\not=2}{i=1}}^{m}\widehat{q}_{i,m}\right)
-\left( m-1\right) q_{1}q_{2}...q_{m}}{q_{1}q_{2}...q_{m}}} \\
&=&2^{\frac{\left( m-1\right) 2^{m-1}+\left( m-1\right) \left(
2^{m-2}\right) -\left( m-1\right) 2^{m-1}}{2^{m-1}}} \\
&=&2^{\frac{m-1}{2}}.
\end{eqnarray*}%
On the other hand, since $C_{m,\mathbf{q}}^{\mathbb{R}}\leq 2^{\frac{m-1}{2}%
} $ (see \cite{abps}), we conclude that
\begin{equation*}
C_{m,\left( 2,1,2,...,2\right) }^{\mathbb{R}}=2^{\frac{m-1}{2}}.  \label{212}
\end{equation*}
\end{proof}

\section{Some remarks on the upper estimates of the general
Bohnenblust--Hille inequality}

In this section we extend some recent results providing upper estimates for
the generalized Bohnenblust--Hille inequality. These results will be used in
the final section.\ We begin by recalling two results:

\begin{lemma}[{\protect\cite[Lemma 2.1]{arapel}}]
\label{arapel1}Let $\left( q_{1},...,q_{m}\right) \in \lbrack 1,2]^{m}$ such
that $\frac{1}{q_{1}}+...+\frac{1}{q_{m}}=\frac{m+1}{2}$. If $q_{i}\geq
\frac{2m-2}{m}$ for some index $i$ and $q_{k}=q_{l}$ for all $k\not=i$ and $%
l\not=i$, then
\begin{equation*}
B_{m,\left( q_{1},...,q_{m}\right) }^{\mathbb{K}}\leq
\prod\limits_{j=2}^{m}A_{\frac{2j-2}{j}}^{-1}
\end{equation*}%
where $A_{\frac{2j-2}{j}}$ are the respective constants of the Khinchine
inequality.
\end{lemma}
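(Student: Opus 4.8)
The plan is to view the prescribed exponent vector as one point of a one--parameter family and to reduce the whole family to the symmetric Bohnenblust--Hille inequality by interpolation. Writing $s=q_{i}$ for the distinguished exponent and $q$ for the common value of the remaining ones, the constraint $\frac{1}{q_{1}}+\cdots +\frac{1}{q_{m}}=\frac{m+1}{2}$ becomes $\frac{1}{s}+(m-1)\frac{1}{q}=\frac{m+1}{2}$, so $q$ is determined by $s$; as $s$ runs through $\left[\frac{2m-2}{m},2\right]$ the pair $(s,q)$ traces a segment on the constraint surface whose endpoints are $\left(\frac{2m-2}{m},\frac{2(m-1)^{2}}{m^{2}-m-1}\right)$ and $\left(2,\frac{2m-2}{m}\right)$. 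The hypothesis $q_{i}\geq \frac{2m-2}{m}$ says exactly that $(s,q)$ lies on this segment, and because interpolation acts coordinatewise on the reciprocals it keeps the distinguished coordinate distinguished and the others mutually equal, so the whole segment has the required shape. I would establish the bound $\prod_{j=2}^{m}A_{\frac{2j-2}{j}}^{-1}$ at the two endpoints and then invoke the interpolation machinery of \cite{abps}, under which the constant of an interpolated exponent is at most the geometric mean of the endpoint constants; since both endpoints carry the same constant, the whole segment inherits it.

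The engine at each endpoint is the one--variable-at-a-time Khinchine scheme, which produces the factors $A_{\frac{2j-2}{j}}^{-1}$ successively. The key numerical coincidence is that the threshold $\frac{2m-2}{m}$ is precisely the Bohnenblust--Hille exponent for $m-1$ variables. Accordingly I would apply the $(m-1)$--variable Bohnenblust--Hille inequality, with its constant $\prod_{j=2}^{m-1}A_{\frac{2j-2}{j}}^{-1}$, to the $m-1$ coordinates carrying the common exponent, in its $\ell_{2}$--valued form (which keeps the same constant because the Hilbert target has cotype $2$); and I would remove the distinguished coordinate by Khinchine's inequality with moment exponent $\frac{2m-2}{m}$. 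Concretely, replacing the basis vector in the distinguished slot by a Rademacher average $\sum_{j}\varepsilon _{j}e_{j}$, which has norm one in $c_{0}$, turns the $\ell _{2}$--sum in that coordinate into a $\frac{2m-2}{m}$--moment that is at most $\Vert T\Vert $, and this is what contributes the last factor $A_{\frac{2m-2}{m}}^{-1}$. Minkowski's inequality is then used to pull the Rademacher average through the remaining summations and to reorder the iterated norm into the required nesting.

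I expect the Minkowski reordering to be the main obstacle. One must check that the exchange of the averaging with the sums, and the passage from the moment exponent $\frac{2m-2}{m}$ to the outer exponent of the target norm, both go in the admissible (norm--increasing) direction; this is exactly where the condition $s\geq \frac{2m-2}{m}$ (equivalently, the common inner exponent not exceeding the value forced at the threshold) enters, and it is the reason the scheme breaks down once $q_{i}$ drops below $\frac{2m-2}{m}$. A secondary technical point is the justification of the $\ell_{2}$--valued $(m-1)$--variable inequality with an unchanged constant. The base case $m=2$ is immediate, since the segment degenerates and the constant is the known value $C_{2,\mathbf{q}}^{\mathbb{R}}=2^{1/2}=A_{1}^{-1}$, valid for every admissible pair.
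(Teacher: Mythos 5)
First, a remark on scope: the paper does not prove this lemma at all --- it is imported verbatim from \cite{arapel} (Lemma 2.1 there), so there is no in-paper proof to compare against; what follows assesses your argument on its own terms. Your architecture (parametrize the family by $s=q_{i}$, prove the bound at extremal exponents, interpolate via the machinery of \cite{abps}) is the standard and correct one, and your Khinchine engine does handle the endpoint $s=2$, whose exponent is $\bigl(\tfrac{2m-2}{m},\dots,2,\dots,\tfrac{2m-2}{m}\bigr)$ with the $2$ in slot $i$: there the common exponent is exactly the $(m-1)$-linear Bohnenblust--Hille exponent and the distinguished slot carries an $\ell_{2}$-sum that Khinchine with moment $\tfrac{2m-2}{m}$ converts into a Rademacher average (with Minkowski to move the $2$ innermost when $i\neq m$). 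The genuine gap is the other endpoint, $s=\tfrac{2m-2}{m}$ with common exponent $\tfrac{2(m-1)^{2}}{m^{2}-m-1}$. Your engine does not apply there: no coordinate carries the exponent $2$, so there is no $\ell_{2}$-sum on which to run Khinchine, and the common exponent is not the $(m-1)$-linear Bohnenblust--Hille exponent, so the $(m-1)$-linear inequality does not produce the required inner norm. Nor can you dominate the common exponent by $\tfrac{2m-2}{m}$ via monotonicity of $\ell_{p}$-norms, since the exponent $\bigl(\tfrac{2m-2}{m},\dots,\tfrac{2m-2}{m}\bigr)$ has $\sum 1/q_{j}=\tfrac{m^{2}}{2m-2}>\tfrac{m+1}{2}$ and hence admits no finite constant. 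You give no argument for this endpoint, and with only one proven endpoint the interpolation yields only the single exponent $s=2$. (Relatedly, your claim that the hypothesis $q_{i}\geq\tfrac{2m-2}{m}$ is used in the Minkowski reordering is a symptom of the same issue: in the endpoint argument that hypothesis never appears; it is really the condition for membership in the relevant convex hull.)

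The repair is to interpolate not between the two endpoints of your segment but among the $m$ exponents $\mathbf{a}(k)$ having $2$ in slot $k$ and $\tfrac{2m-2}{m}$ in the remaining slots, each of which your engine does prove with constant $\prod_{j=2}^{m}A_{\frac{2j-2}{j}}^{-1}$. Writing $1/q_{j}=\sum_{k}\theta_{k}/q_{j}(k)=\tfrac{m-\theta_{j}}{2m-2}$ with $\theta_{j}\geq0$ and $\sum_{j}\theta_{j}=1$, the convex hull of the reciprocals of the $\mathbf{a}(k)$ is exactly the set of admissible exponents with every $q_{j}\in\bigl[\tfrac{2m-2}{m},2\bigr]$, and a short computation shows your entire segment lies in this hull (take $\theta_{i}=m-\tfrac{2m-2}{s}$ and equal weights on the rest); in particular the missing endpoint is the equal-weight combination of the $\mathbf{a}(k)$ with $k\neq i$. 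With that substitution, and the geometric-mean interpolation inequality you already invoke, the proof closes. A small further correction: for $m=2$ the segment does not degenerate --- it is the full admissible range $s\in[1,2]$ --- but the conclusion $C_{2,\mathbf{q}}^{\mathbb{R}}=\sqrt{2}$ there is indeed known from \cite{abps}.
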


\begin{theorem}[{\protect\cite[Theorem 2.3]{arapel}}]
\label{arapel2} If\textit{\ }$m\geq 2$ is a positive integer, \textit{and} $%
\mathbf{q}:=(q_{1},...,q_{m})\in \left[ 1,2\right] ^{m}$ are \textit{such
that}
\begin{equation*}
\frac{1}{q_{1}}+\cdots +\frac{1}{q_{m}}=\frac{m+1}{2},
\end{equation*}%
and
\begin{equation*}
\max q_{i}<\frac{2m^{2}-4m+2}{m^{2}-m-1}
\end{equation*}%
t\textit{hen}%
\begin{equation*}
C_{m,\mathbf{q}}^{\mathbb{K}}\leq \prod_{j=2}^{m}A_{\frac{2j-2}{j}}^{-1}.
\end{equation*}
\end{theorem}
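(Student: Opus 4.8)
The plan is to realize the given exponent $\mathbf{q}$ as a generalized-Hölder interpolation of $m$ exponent vectors each of which is already covered by Lemma~\ref{arapel1}, and then to invoke the mixed-norm interpolation machinery of \cite[Section 2]{abps}. The whole proof is engineered around a single observation about the threshold $\tau:=\frac{2m^{2}-4m+2}{m^{2}-m-1}$.

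First I would single out the right extremal exponents. For each $i\in\{1,\dots,m\}$ let $\mathbf{s}^{(i)}=(s_{1}^{(i)},\dots,s_{m}^{(i)})$ be the vector whose $i$-th coordinate equals $\frac{2m-2}{m}$ and whose remaining $m-1$ coordinates all equal $\tau$. Since $m\geq 2$ one checks directly that $\frac{2m-2}{m},\tau\in[1,2]$, so $\mathbf{s}^{(i)}\in[1,2]^{m}$, and that $\frac{1}{s_{i}^{(i)}}+(m-1)\frac{1}{\tau}=\frac{m}{2(m-1)}+\frac{m^{2}-m-1}{2(m-1)}=\frac{m+1}{2}$, so each $\mathbf{s}^{(i)}$ meets the hypothesis of Theorem~\ref{bh_gen}. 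Moreover $\mathbf{s}^{(i)}$ has a single coordinate (the $i$-th) equal to $\frac{2m-2}{m}\geq\frac{2m-2}{m}$ with all the others equal, so Lemma~\ref{arapel1} applies with index $i$ and gives $C_{m,\mathbf{s}^{(i)}}^{\mathbb{K}}\leq\prod_{j=2}^{m}A_{\frac{2j-2}{j}}^{-1}$ for every $i$.

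Next I would produce the interpolation weights. Writing $P=\frac{m}{2(m-1)}$ and $R=\frac{1}{\tau}=\frac{m^{2}-m-1}{2(m-1)^{2}}$, so that $P-R=\frac{1}{2(m-1)^{2}}$, I set $\theta_{j}:=2(m-1)^{2}\bigl(\frac{1}{q_{j}}-R\bigr)$. Then $\frac{1}{q_{j}}=\theta_{j}P+(1-\theta_{j})R=\sum_{i=1}^{m}\theta_{i}\frac{1}{s_{j}^{(i)}}$ for every $j$, i.e. $\mathbf{q}$ interpolates the family $\{\mathbf{s}^{(i)}\}$. The hypothesis $\max_{i}q_{i}<\tau$ gives $\frac{1}{q_{j}}>R$, hence $\theta_{j}>0$; and the normalization $\sum_{j}\frac{1}{q_{j}}=\frac{m+1}{2}$ forces $\sum_{j}\theta_{j}=2(m-1)^{2}\bigl(\frac{m+1}{2}-mR\bigr)=1$, the last equality being the routine identity $(m+1)(m-1)^{2}-m(m^{2}-m-1)=1$. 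Thus $(\theta_{1},\dots,\theta_{m})$ is a genuine system of interpolation weights.

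Finally I would apply the generalized Hölder (mixed-norm) interpolation of \cite[Section 2]{abps}: the mixed $(q_{1},\dots,q_{m})$-norm of the matrix $\bigl(T(e_{j_{1}},\dots,e_{j_{m}})\bigr)$ is dominated by the product of the $\theta_{i}$-powers of its mixed $\mathbf{s}^{(i)}$-norms, whence $C_{m,\mathbf{q}}^{\mathbb{K}}\leq\prod_{i=1}^{m}\bigl(C_{m,\mathbf{s}^{(i)}}^{\mathbb{K}}\bigr)^{\theta_{i}}\leq\prod_{i=1}^{m}\bigl(\prod_{j=2}^{m}A_{\frac{2j-2}{j}}^{-1}\bigr)^{\theta_{i}}=\prod_{j=2}^{m}A_{\frac{2j-2}{j}}^{-1}$, using $\sum_{i}\theta_{i}=1$. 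The conceptual crux, and the only non-routine step, is the identification of the vectors $\mathbf{s}^{(i)}$: the value $\tau$ arises precisely as the common value of the $m-1$ non-special coordinates when the special coordinate is pushed down to the Lemma~\ref{arapel1} boundary $\frac{2m-2}{m}$, and it is exactly this choice that makes every $\theta_{j}$ nonnegative under the assumption $\max_{i}q_{i}<\tau$ while still summing to $1$; everything else is bookkeeping.
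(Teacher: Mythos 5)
Your proposal is correct, but note that the paper itself offers no proof of Theorem \ref{arapel2}: it is imported verbatim from \cite[Theorem 2.3]{arapel}, so there is no in-paper argument to compare against line by line. What you have written is a complete and verifiable derivation of Theorem \ref{arapel2} from Lemma \ref{arapel1} alone. I checked the key computations: each $\mathbf{s}^{(i)}$ lies in $[1,2]^m$ (indeed $1\leq \tau\leq 2$ reduces to $m^2-3m+3\geq 0$ and $m\geq 2$), satisfies $\frac{1}{s_i^{(i)}}+(m-1)\frac{1}{\tau}=\frac{m+1}{2}$, and meets the hypotheses of Lemma \ref{arapel1}; the weights $\theta_j=2(m-1)^2\left(\frac{1}{q_j}-\frac{1}{\tau}\right)$ are positive exactly under $\max_i q_i<\tau$ and sum to $1$ via $(m+1)(m-1)^2-m(m^2-m-1)=1$; and the mixed-norm H\"older step is exactly \cite[Proposition 2.1]{abps} (equivalently \cite[Lemma 2.1]{anss}), which the paper itself invokes in the proof of Proposition \ref{up}. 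In effect you have shown that Theorem \ref{arapel2} is the special case of the paper's Proposition \ref{up} in which the convex hull is generated by the $m$ extremal exponents $\mathbf{s}^{(1)},\dots,\mathbf{s}^{(m)}$ covered by Lemma \ref{arapel1}, with the threshold $\frac{2m^2-4m+2}{m^2-m-1}$ arising as the common non-special coordinate of those generators; this is a clean and slightly more self-contained route than treating Lemma \ref{arapel1} and Theorem \ref{arapel2} as two independent black boxes, and I see no gap in it.
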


\bigskip Combining these two results and using an interpolative approach
(see \cite[Proposition 2.1]{abps} and \cite[Lemma 2.1]{anss}) we can prove
the following slightly more general result.

\begin{proposition}
\label{up}Let\textit{\ }$m\geq 2$ and $N~$be positive integers \textit{and} $%
~\mathbf{q,q}\left( 1\right) ,...,\mathbf{q}\left( N\right) \in \left[ 1,2%
\right] ^{m}$ be such that $\left( \frac{1}{q_{1}},...,\frac{1}{q_{m}}%
\right) $ belong to the convex hull of $\left( \frac{1}{q_{1}\left( k\right)
},...,\frac{1}{q_{m}\left( k\right) }\right) $, $k=1,...,N,$ where
\begin{equation*}
\frac{1}{q_{1}\left( k\right) }+\cdots +\frac{1}{q_{m}\left( k\right) }=%
\frac{m+1}{2},\text{ for all }k=1,...,N.
\end{equation*}%
If, for each $k=1,...,N$,
\begin{equation*}
\max_{i}q_{i}\left( k\right) <\frac{2m^{2}-4m+2}{m^{2}-m-1}
\end{equation*}%
or%
\begin{equation*}
q_{i}\left( k\right) \geq \frac{2m-2}{m}
\end{equation*}%
for some index $i$ and $q_{j}\left( k\right) =q_{l}\left( k\right) $ for all
$j\not=i$ and $l\not=i$, t\textit{hen}%
\begin{equation*}
C_{m,\mathbf{q}}^{\mathbb{K}}\leq \prod_{t=2}^{m}A_{\frac{2t-2}{t}}^{-1}.
\end{equation*}
\end{proposition}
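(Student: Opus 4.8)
The plan is to deal with each interpolating point $\mathbf{q}(k)$ separately and then to glue the resulting estimates together by interpolation. The first observation is that the two alternatives in the hypothesis are exactly the hypotheses of the two results recalled above: if $\max_i q_i(k) < \frac{2m^2-4m+2}{m^2-m-1}$, then Theorem \ref{arapel2} gives $C_{m,\mathbf{q}(k)}^{\mathbb{K}} \leq \prod_{t=2}^m A_{\frac{2t-2}{t}}^{-1}$; and if instead $q_i(k) \geq \frac{2m-2}{m}$ for some index $i$ with $q_j(k) = q_l(k)$ for all $j \neq i$ and $l \neq i$, then Lemma \ref{arapel1} gives the very same bound for the optimal constant. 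Thus, in either case, for \emph{every} $k = 1, \ldots, N$ we obtain the single uniform estimate $C_{m,\mathbf{q}(k)}^{\mathbb{K}} \leq \prod_{t=2}^m A_{\frac{2t-2}{t}}^{-1}$.

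Next I would record the convex-combination data. By hypothesis there are scalars $\theta_1, \ldots, \theta_N \geq 0$ with $\sum_{k=1}^N \theta_k = 1$ such that $\frac{1}{q_j} = \sum_{k=1}^N \theta_k \frac{1}{q_j(k)}$ for every $j = 1, \ldots, m$. Summing over $j$ and using that each vertex lies on the hyperplane $\frac{1}{q_1(k)} + \cdots + \frac{1}{q_m(k)} = \frac{m+1}{2}$, one sees that $\mathbf{q}$ itself satisfies $\frac{1}{q_1} + \cdots + \frac{1}{q_m} = \frac{m+1}{2}$, so Theorem \ref{bh_gen} applies at $\mathbf{q}$ and it is meaningful to bound $C_{m,\mathbf{q}}^{\mathbb{K}}$.

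I would then invoke the interpolative machinery for multiple exponents (\cite[Proposition 2.1]{abps} and \cite[Lemma 2.1]{anss}), which is precisely the mixed-$\ell_p$ H\"{o}lder--Minkowski inequality controlling the constant at an interpolated exponent by the product of the constants at the interpolating exponents. Applied to the present convex combination, it yields $C_{m,\mathbf{q}}^{\mathbb{K}} \leq \prod_{k=1}^N \left(C_{m,\mathbf{q}(k)}^{\mathbb{K}}\right)^{\theta_k}$. Substituting the uniform bound from the first step and using $\sum_{k=1}^N \theta_k = 1$ gives $C_{m,\mathbf{q}}^{\mathbb{K}} \leq \prod_{k=1}^N \left(\prod_{t=2}^m A_{\frac{2t-2}{t}}^{-1}\right)^{\theta_k} = \prod_{t=2}^m A_{\frac{2t-2}{t}}^{-1}$, which is the assertion.

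The main obstacle I anticipate lies entirely in the interpolation step: one must check that the multiple-exponent interpolation is applied along the correct coordinate directions, i.e.\ that the single parameter $\theta_k$ acts simultaneously on all $m$ indices of the iterated mixed norm, so that the vertex constants combine multiplicatively as above. Once the mixed-norm interpolation inequality of \cite{abps, anss} is quoted in this exact form, the remaining computation is immediate, and the first step (matching each vertex to either Theorem \ref{arapel2} or Lemma \ref{arapel1}) is a routine verification of the two numerical thresholds.
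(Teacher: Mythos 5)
Your proof is correct and follows essentially the same route as the paper: match each vertex $\mathbf{q}(k)$ to Lemma \ref{arapel1} or Theorem \ref{arapel2} to get the uniform bound, then use the mixed-norm interpolation of \cite[Proposition 2.1]{abps} on the convex combination $\frac{1}{q_i}=\sum_k \theta_k/q_i(k)$ to obtain $C_{m,\mathbf{q}}^{\mathbb{K}}\leq \prod_k \bigl(C_{m,\mathbf{q}(k)}^{\mathbb{K}}\bigr)^{\theta_k}=\prod_{t=2}^m A_{\frac{2t-2}{t}}^{-1}$. The only cosmetic difference is that you state the interpolation inequality at the level of the constants while the paper applies it to the mixed norms $\Vert T(e_{j_1},\dots,e_{j_m})\Vert_{\mathbf{q}}$ of the coefficient array, which is the same content.
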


\begin{proof}
Let us suppose that for each $q_{i}\left( k\right) $, with $k=1,...,N$,
\begin{equation*}
\max_{i}q_{i}\left( k\right) <\frac{2m^{2}-4m+2}{m^{2}-m-1}
\end{equation*}%
or%
\begin{equation*}
q_{i}\left( k\right) \geq \frac{2m-2}{m}
\end{equation*}%
for some index $i$ and $q_{j}\left( k\right) =q_{l}\left( k\right) $ for all
$j\not=i$ and $l\not=i$. No matter what situation happens, for each $k\in
\{1,...,N\}$, we have from Lemma \ref{arapel1}\ or from Theorem \ref{arapel2}
that
\begin{equation*}
C_{m,\mathbf{q}\left( k\right) }^{\mathbb{K}}\leq \prod_{j=2}^{m}A_{\frac{%
2j-2}{j}}^{-1}.
\end{equation*}%
Now, suppose that $\mathbf{q=}\left( \frac{1}{q_{1}},...,\frac{1}{q_{m}}%
\right) ~$belongs to the convex hull of $\left( \frac{1}{q_{1}\left(
k\right) },...,\frac{1}{q_{m}\left( k\right) }\right) ,k=1,...,N,$ i. e.
\begin{equation*}
\frac{1}{q_{i}}=\frac{\theta _{1}}{q_{i}\left( 1\right) }+...+\frac{\theta
_{N}}{q_{i}\left( N\right) }
\end{equation*}%
with $\sum_{k=1}^{N}\theta _{k}=1$ and $\theta _{k}\in \left[ 0,1\right] $
for all $k.$ So, by the interpolation procedure from \cite[Proposition 2.1]%
{abps}, we have
\begin{equation*}
\left\Vert T(e_{j_{1}},...,e_{j_{m}})\right\Vert _{\mathbf{q}}\leq
\prod\limits_{k=1}^{N}\left\Vert T(e_{j_{1}},...,e_{j_{m}})\right\Vert _{%
\mathbf{q}\left( k\right) }^{\theta _{k}}.
\end{equation*}%
From Lemma $\ref{arapel1}\ $and/or from Theorem $\ref{arapel2}$ we have
\begin{eqnarray*}
\prod\limits_{k=1}^{N}\left\Vert T(e_{j_{1}},...,e_{j_{m}})\right\Vert _{%
\mathbf{q}\left( k\right) }^{\theta _{k}} &\leq
&\prod\limits_{k=1}^{N}\left( \prod_{t=2}^{m}A_{\frac{2t-2}{t}%
}^{-1}\left\Vert T\right\Vert \right) ^{\theta _{k}} \\
&=&\prod_{t=2}^{m}A_{\frac{2t-2}{t}}^{-1}\left\Vert T\right\Vert
\end{eqnarray*}%
and thus
\begin{eqnarray*}
\left( \sum_{j_{1}=1}^{n}\left( \sum_{j_{2}=1}^{n}\left( \cdots \left(
\sum_{j_{m}=1}^{n}\left\vert T(e_{j_{1}},...,e_{j_{m}})\right\vert
^{q_{m}}\right) ^{\frac{q_{m-1}}{q_{m}}}\cdots \right) ^{\frac{q_{2}}{q_{3}}%
}\right) ^{\frac{q_{1}}{q_{2}}}\right) ^{\frac{1}{q_{1}}} &=&\left\Vert
T(e_{j_{1}},...,e_{j_{m}})\right\Vert _{\mathbf{q}} \\
&\leq &\prod_{t=2}^{m}A_{\frac{2t-2}{t}}^{-1}\left\Vert T\right\Vert .
\end{eqnarray*}
\end{proof}

\bigskip

\begin{example}
Note that the above result encompasses cases not covered by the previous
results, and we still have%
\begin{equation*}
C_{m,\mathbf{q}}^{\mathbb{K}}\leq \prod_{t=2}^{m}A_{\frac{2t-2}{t}}^{-1}.
\end{equation*}%
For instance, suppose that $m=N=3$ and $\mathbf{q}\left( 1\right) =\left( 2,%
\frac{4}{3},\frac{4}{3}\right) $, $\mathbf{q}\left( 2\right) =\left( \frac{4%
}{3},2,\frac{4}{3}\right) \,$, $\mathbf{q}\left( 3\right) =\left( \frac{4}{3}%
,\frac{4}{3},2\right) $. So, from the Proposition \ref{up}, for
\begin{equation*}
\mathbf{q}=\left( \frac{4}{3-\theta _{1}},\frac{4}{3-\theta _{2}},\frac{4}{%
3-\theta _{3}}\right) ;~\theta _{1},\theta _{2},\theta _{3}\in \lbrack 0,1]%
\text{ and }\theta _{1}+\theta _{2}+\theta _{3}=1
\end{equation*}%
we have
\begin{equation*}
C_{3,\mathbf{q}}^{\mathbb{K}}\leq \prod_{t=2}^{3}A_{\frac{2t-2}{t}}^{-1}.
\end{equation*}%
Considering $\left( \theta _{1},\theta _{2},\theta _{3}\right) =\left( \frac{%
7}{10},\frac{1}{10},\frac{2}{10}\right) $ we have
\begin{equation*}
\mathbf{q}=\left( \frac{40}{23},\frac{40}{29},\frac{40}{28}\right)
\end{equation*}%
and, of course, $\mathbf{q}$ does not satisfy the hypotheses of Lemma \ref%
{arapel1}, and since%
\begin{equation*}
\frac{40}{23}>\frac{2\left( 3\right) ^{2}-4\left( 3\right) +2}{\left(
3\right) ^{2}-\left( 3\right) -1}=\allowbreak \frac{8}{5},
\end{equation*}%
$\mathbf{q}$ also does not satisfy the hypotheses of Theorem \ref{arapel2}.
\end{example}

\bigskip\

\section{Application: Sharp estimates for the general Bohnenblust--Hille
inequality for $3$-linear forms}

\bigskip In this final section we use the results of the previous sections
to obtain sharp estimates for the general Bohnenblust--Hille inequality for $%
3$-linear forms.

\begin{proposition}
\label{110}\bigskip \bigskip Let $\tau ,\theta \in \left[ 0,1\right] ^{2}$.
If
\begin{equation*}
\mathbf{q}=\left( \frac{4}{\theta +3},\frac{4}{2+\tau -\theta \tau },\frac{4%
}{3+\theta \tau -\theta -\tau }\right) ~~
\end{equation*}%
then, the optimal constant of the generalized Bohnenblust--Hille inequality
for real scalars is
\begin{equation*}
C_{3,\mathbf{q}}^{\mathbb{R}}=2^{\frac{\theta +3}{4}}.
\end{equation*}
\end{proposition}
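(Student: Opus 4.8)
The plan is to sandwich $C_{3,\mathbf{q}}^{\mathbb{R}}$ between a lower bound coming from Theorem \ref{general} and an upper bound coming from the interpolation machinery of Section 4, both equal to $2^{(\theta+3)/4}$. As a preliminary step I would record two routine facts: a direct cancellation in the numerators shows $\frac{1}{q_1}+\frac{1}{q_2}+\frac{1}{q_3}=2=\frac{m+1}{2}$, and for every $(\tau,\theta)\in[0,1]^2$ one has $\mathbf{q}\in[1,2]^3$, so that both tools apply. It is also worth noting at the outset that $\frac{1}{q_1}=\frac{\theta+3}{4}$, so the claimed value is exactly $2^{1/q_1}$.

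For the lower bound I would simply feed $m=3$ into Theorem \ref{general}. With $\widehat q_{1,3}=q_2q_3$, $\widehat q_{2,3}=q_1q_3$, $\widehat q_{3,3}=q_1q_2$, the exponent in (\ref{lower}) is
\[
\frac{2q_2q_3+q_1q_3+q_1q_2-2q_1q_2q_3}{q_1q_2q_3}=\frac{2}{q_1}+\frac{1}{q_2}+\frac{1}{q_3}-2,
\]
which collapses to $\frac{1}{q_1}$ after using $\frac{1}{q_1}+\frac{1}{q_2}+\frac{1}{q_3}=2$. Hence $C_{3,\mathbf{q}}^{\mathbb{R}}\geq 2^{1/q_1}=2^{(\theta+3)/4}$.

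For the upper bound I would exhibit $\left(\frac{1}{q_1},\frac{1}{q_2},\frac{1}{q_3}\right)$ as a convex combination of the exponents $\mathbf{p}_1=\left(\frac43,2,\frac43\right)$, $\mathbf{p}_2=\left(\frac43,\frac43,2\right)$ and $\mathbf{p}_3=(1,2,2)$, with weights $(1-\theta)(1-\tau)$, $(1-\theta)\tau$ and $\theta$ (which sum to $1$); a coordinatewise check confirms these weights reproduce $\frac{\theta+3}{4}$, $\frac{2+\tau-\theta\tau}{4}$ and $\frac{3+\theta\tau-\theta-\tau}{4}$. For $\mathbf{p}_1$ and $\mathbf{p}_2$ the hypotheses of Lemma \ref{arapel1} hold, so Proposition \ref{up} gives constants at most $\prod_{t=2}^{3}A_{\frac{2t-2}{t}}^{-1}=2^{3/4}$, while for $\mathbf{p}_3=(1,2,2)$ the sharp value $2=2^{(m-1)/2}$ is the result of \cite{natal}. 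The interpolative inequality \cite[Proposition 2.1]{abps} then yields
\[
C_{3,\mathbf{q}}^{\mathbb{R}}\leq \left(2^{3/4}\right)^{(1-\theta)(1-\tau)+(1-\theta)\tau}\,2^{\theta}=2^{\frac{3(1-\theta)}{4}+\theta}=2^{\frac{\theta+3}{4}},
\]
matching the lower bound and proving the proposition.

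The step I expect to be the main obstacle is finding the correct three-point decomposition for the upper bound. The two ``edge'' exponents $\mathbf{p}_1,\mathbf{p}_2$ are covered by Section 4, but the ``vertex'' $(1,2,2)$ is not: the only index whose removal leaves two equal coordinates is the first, where $q_1=1<\frac43$, so Lemma \ref{arapel1} fails and one must instead import the sharp constant $2$ from \cite{natal}. It is exactly this sharp value at the $\theta=1$ vertex, interpolated against $2^{3/4}$ along the $\theta=0$ edge, that generates the $\theta$-dependent exponent $\frac{\theta+3}{4}$.
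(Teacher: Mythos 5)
Your proof is correct and follows essentially the same route as the paper: the lower bound is Theorem \ref{general} with $m=3$ (where, as you note, the exponent collapses to $1/q_1=(\theta+3)/4$), and the upper bound interpolates the same three extreme exponents $\left(\tfrac43,2,\tfrac43\right)$, $\left(\tfrac43,\tfrac43,2\right)$ (constant $2^{3/4}$ via Lemma \ref{arapel1}) and $(1,2,2)$ (constant $2$ from \cite{natal}). The only difference is organizational: the paper first settles the $\theta=0$ edge via Proposition \ref{up} and then interpolates that edge against the vertex $(1,2,2)$, whereas you carry out the equivalent three-point convex combination in a single step.
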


\begin{proof}
When $m=3$ and $\mathbf{q}=(\alpha ,\beta ,\gamma )$, from Theorem \ref%
{general}, we have
\begin{equation}
C_{3,\mathbf{q}}^{\mathbb{R}}\geq 2^{\frac{2\beta \gamma +\alpha \beta
+\alpha \gamma -2\alpha \beta \gamma }{\alpha \beta \gamma }}.  \label{tri}
\end{equation}%
Let us first consider the case $\theta =0$. We can verify that the values of
$\left( \alpha ,\beta ,\gamma \right) \in \left[ 1,2\right] ^{3}$ with $%
\frac{1}{\alpha }+\frac{1}{\beta }+\frac{1}{\gamma }=2$ for which
\begin{equation*}
2^{\frac{2\beta \gamma +\alpha \beta +\alpha \gamma -2\alpha \beta \gamma }{%
\alpha \beta \gamma }}=2^{\frac{3}{4}}
\end{equation*}%
is precisely $\left( \frac{4}{3},\frac{4\gamma }{5\gamma -4},\gamma \right) $
with $\gamma \in \left[ \frac{4}{3},2\right] .$ Equivalently, $\left( \frac{4%
}{3},\frac{4}{2+\tau },\frac{4}{3-\tau }\right) $ with $\tau \in \left[ 0,1%
\right] $. Thus, using (\ref{tri}), if
\begin{equation*}
\mathbf{q}=\left( \frac{4}{3},\frac{4}{2+\tau },\frac{4}{3-\tau }\right) ~
\end{equation*}%
with $\tau \in \left[ 0,1\right] ,$ then
\begin{equation*}
C_{3,\mathbf{q}}^{\mathbb{R}}\geq 2^{\frac{3}{4}}.
\end{equation*}

On the other hand, from \cite[Lemma 2.1]{arapel} we know that for $\mathbf{q}%
\left( 1\right) =\left( \frac{4}{3},2,\frac{4}{3}\right) ,$ $\mathbf{q}%
\left( 2\right) =\left( \frac{4}{3},\frac{4}{3},2\right) $\textbf{, }we have
$C_{3,\mathbf{q}\left( 1\right) }^{\mathbb{R}}\leq 2^{\frac{3}{4}}$ and $%
C_{3,\mathbf{q}\left( 2\right) }^{\mathbb{R}}\leq 2^{\frac{3}{4}}$\textbf{,}
and since \ $\mathbf{q}=\left( \frac{4}{3},\frac{4}{2+\tau },\frac{4}{3-\tau
}\right) $ belongs to the convex hull of $\mathbf{q}\left( 2\right) $ and $%
\mathbf{q}\left( 1\right) $ for $\tau \in \left[ 0,1\right] $, from \
Proposition \ref{up} with $k=2$ we conclude that
\begin{equation*}
C_{3,\mathbf{q}}^{\mathbb{R}}\leq 2^{\frac{3}{4}}.
\end{equation*}%
Thus, if $\mathbf{q}=\left( \frac{4}{3},\frac{4}{2+\tau },\frac{4}{3-\tau }%
\right) $ with $\tau \in \left[ 0,1\right] $, then%
\begin{equation*}
C_{3,\mathbf{q}}^{\mathbb{R}}=2^{\frac{3}{4}}.
\end{equation*}%
This proves the result for $\theta =0$.

Let us prove the case $\theta \in (0,1].$ We can verify that the values $%
\left( \alpha ,\beta ,\gamma \right) \in \left[ 1,2\right] ^{3}$ with $\frac{%
1}{\alpha }+\frac{1}{\beta }+\frac{1}{\gamma }=2$ for which
\begin{equation*}
2^{\frac{2\beta \gamma +\alpha \beta +\alpha \gamma -2\alpha \beta \gamma }{%
\alpha \beta \gamma }}=2^{\frac{\theta +3}{4}}
\end{equation*}%
are $\left( \frac{4}{\theta +3},\frac{4\gamma }{5\gamma -\theta \gamma -4}%
,\gamma \right) ~$ and $\left( \frac{4}{\theta +3},\beta ,\frac{4\beta }{%
5\beta -\theta \beta -4}\right) ~$for $\gamma ,\beta \in \left[ \frac{4}{%
3-\theta },2\right] $ and $\theta \in \left[ 0,1\right] $. Equivalently,
\begin{equation*}
\left( \frac{4}{\theta +3},\frac{4}{2+\tau -\theta \tau },\frac{4}{3+\theta
\tau -\theta -\tau }\right) \text{ for }\tau ,\theta \in \left[ 0,1\right] .
\end{equation*}%

\pagebreak

Thus, we invoke (\ref{tri}) to conclude that for $\overline{\mathbf{q}}%
=\left( \frac{4}{\theta +3},\frac{4}{2+\tau -\theta \tau },\frac{4}{3+\theta
\tau -\theta -\tau }\right) $ with $\tau ,\theta \in \left[ 0,1\right] ,$ we
have%
\begin{equation}
C_{3,\overline{\mathbf{q}}}^{\mathbb{K}}\geq 2^{\frac{\theta +3}{4}}.
\label{27}
\end{equation}

On the other hand, from \cite[Theorem 2.1]{natal} we know that for $%
\overline{\mathbf{q}}\left( 1\right) =\left( 1,2,2\right) ,$ we have $C_{3,%
\overline{\mathbf{q}}\left( 1\right) }^{\mathbb{R}}=2.$ Moreover, from what
we just did we have, for $\tau \in \left[ 0,1\right] $\textbf{\ }and\textbf{%
\ }$\overline{\mathbf{q}}\left( 2\right) =\left( \frac{4}{3},\frac{4}{2+\tau
},\frac{4}{3-\tau }\right) ,$\textbf{\ }that $C_{3,\overline{\mathbf{q}}%
\left( 2\right) }^{\mathbb{R}}=2^{\frac{3}{4}}$. Interpolating $\overline{%
\mathbf{q}}\left( 1\right) =\left( 1,2,2\right) $ and $\overline{\mathbf{q}}%
\left( 2\right) =\left( \frac{4}{3},\frac{4}{3-\tau },\frac{4}{2+\tau }%
\right) $, we obtain
\begin{eqnarray*}
\frac{1}{q_{1}} &=&\frac{\theta }{1}+\frac{1-\theta }{\frac{4}{3}}%
\Rightarrow q_{1}=\frac{4}{\theta +3} \\
\frac{1}{q_{2}} &=&\frac{\theta }{2}+\frac{1-\theta }{\frac{4}{2+\tau }}%
\Rightarrow q_{2}=\frac{4}{2+\tau -\theta \tau }. \\
\frac{1}{q_{3}} &=&\frac{\theta }{2}+\frac{1-\theta }{\frac{4}{3-\tau }}%
\Rightarrow q_{3}=\frac{4}{3+\theta \tau -\tau -\theta }
\end{eqnarray*}%
i.e., $\overline{\mathbf{q}}=\left( \frac{4}{\theta +3},\frac{4}{2+\tau
-\theta \tau },\frac{4}{3+\theta \tau -\theta -\tau }\right) $, and thus
\begin{eqnarray*}
C_{3,\overline{\mathbf{q}}}^{\mathbb{R}} &\leq &2^{\theta }2^{\frac{3}{4}%
\left( 1-\theta \right) } \\
&=&2^{\frac{\theta +3}{4}},
\end{eqnarray*}%
and from $\left( \ref{27}\right) $ we conclude that%
\begin{equation*}
C_{3,\overline{\mathbf{q}}}^{\mathbb{R}}=2^{\frac{\theta +3}{4}}
\end{equation*}%
for $\overline{\mathbf{q}}=\left( \frac{4}{\theta +3},\frac{4}{2+\tau
-\theta \tau },\frac{4}{3+\theta \tau -\theta -\tau }\right) $ with $\tau
\in \left[ 0,1\right] $ and $\theta \in \left[ 0,1\right] .$
\end{proof}

We note that in the above theorem, in order to get the lower estimates, we
have just used (\ref{tri}), which is a consequence of the Theorem \ref%
{general}. If we use Theorem \ref{outro_opera} instead of Theorem \ref%
{general}, we can prove that for $m=3$ and $\mathbf{q}=(\alpha ,\beta
,\gamma )$ we have
\begin{equation*}
C_{3,\mathbf{q}}^{\mathbb{R}}\geq 2^{\frac{2\alpha \gamma +\alpha \beta
+\beta \gamma -2\alpha \beta \gamma }{\alpha \beta \gamma }}.
\end{equation*}%
Using an analogous argument we can prove the following:

\begin{proposition}
\label{210}\bigskip Let $\tau \in \left[ 0,1\right] $ and $\theta \in \left[
0,1\right] $. If
\begin{equation*}
\text{ }\mathbf{q}=\left( \frac{4}{2+\tau -\theta \tau },\frac{4}{\theta +3},%
\frac{4}{3+\theta \tau -\theta -\tau }\right)
\end{equation*}%
then, the optimal constant of the generalized Bohnenblust--Hille inequality
for real scalars is
\begin{equation*}
C_{3,\mathbf{q}}^{\mathbb{R}}=2^{\frac{\theta +3}{4}}.
\end{equation*}
\end{proposition}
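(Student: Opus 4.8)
The plan is to mirror the proof of Proposition \ref{110}, exchanging the roles of the first two coordinates and replacing Theorem \ref{general} by Theorem \ref{outro_opera} as the source of the lower estimate. Writing $\mathbf{q}=(\alpha,\beta,\gamma)$ with $\alpha=\frac{4}{2+\tau-\theta\tau}$, $\beta=\frac{4}{\theta+3}$ and $\gamma=\frac{4}{3+\theta\tau-\theta-\tau}$, I would first record that $\frac{1}{\alpha}+\frac{1}{\beta}+\frac{1}{\gamma}=2$ and that $\mathbf{q}\in[1,2]^{3}$ for all $\tau,\theta\in[0,1]$, since the three denominators $2+\tau(1-\theta)$, $\theta+3$ and $3-(\theta+\tau-\theta\tau)$ each lie in $[2,4]$. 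For the lower bound I would invoke the inequality
\begin{equation*}
C_{3,\mathbf{q}}^{\mathbb{R}}\geq 2^{\frac{2\alpha\gamma+\alpha\beta+\beta\gamma-2\alpha\beta\gamma}{\alpha\beta\gamma}}
\end{equation*}
recorded just before the statement as a consequence of Theorem \ref{outro_opera}. Dividing through by $\alpha\beta\gamma$, the exponent collapses to $\frac{1}{\alpha}+\frac{2}{\beta}+\frac{1}{\gamma}-2=\frac{1}{\beta}$ by the constraint, and since $\beta=\frac{4}{\theta+3}$ this yields exactly $C_{3,\mathbf{q}}^{\mathbb{R}}\geq 2^{(\theta+3)/4}$.

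For the upper bound I would split into the cases $\theta=0$ and $\theta\in(0,1]$, exactly as in Proposition \ref{110}. When $\theta=0$ the exponent reduces to $\mathbf{q}=\left(\frac{4}{2+\tau},\frac{4}{3},\frac{4}{3-\tau}\right)$, whose reciprocal vector is the convex combination $s\left(\frac12,\frac34,\frac34\right)+(1-s)\left(\frac34,\frac34,\frac12\right)$ with $s=1-\tau\in[0,1]$; these are the reciprocals of the exponents $(2,\frac43,\frac43)$ and $(\frac43,\frac43,2)$. Both endpoints satisfy the hypothesis of Lemma \ref{arapel1} (with special index $i=1$ and $i=3$ respectively, the two remaining coordinates being equal to $\frac43$), so each carries the constant $\prod_{j=2}^{3}A_{\frac{2j-2}{j}}^{-1}=2^{1/2}\cdot 2^{1/4}=2^{3/4}$; Proposition \ref{up} then gives $C_{3,\mathbf{q}}^{\mathbb{R}}\leq 2^{3/4}=2^{(\theta+3)/4}$.

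For $\theta\in(0,1]$ I would interpolate the two corners $\overline{\mathbf{q}}(1)=(2,1,2)$ and $\overline{\mathbf{q}}(2)=\left(\frac{4}{2+\tau},\frac{4}{3},\frac{4}{3-\tau}\right)$ with weights $\theta$ and $1-\theta$. The corner $\overline{\mathbf{q}}(1)=(2,1,2)$ has $C_{3,(2,1,2)}^{\mathbb{R}}=2$ by the Corollary of Section 3 (the $(2,1,2,\dots,2)$ case with $m=3$), while $\overline{\mathbf{q}}(2)$ has constant $2^{3/4}$ by the previous step. A direct computation in reciprocal coordinates shows that this interpolation lands precisely at $\mathbf{q}=\left(\frac{4}{2+\tau-\theta\tau},\frac{4}{\theta+3},\frac{4}{3+\theta\tau-\theta-\tau}\right)$, and the interpolation inequality yields $C_{3,\mathbf{q}}^{\mathbb{R}}\leq 2^{\theta}\bigl(2^{3/4}\bigr)^{1-\theta}=2^{(\theta+3)/4}$. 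Combining this with the lower bound completes the proof.

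I do not expect a genuine obstacle, since the argument is structurally identical to that of Proposition \ref{110}; the only real care points are bookkeeping. One must make sure to use the correct lower-estimate formula coming from Theorem \ref{outro_opera} (with the factor $2$ attached to $\alpha\gamma$ rather than to $\beta\gamma$), and to check that the two interpolation corners lie in $[1,2]^{3}$ and meet the structural requirement $q_k=q_l$ for $k,l\neq i$ demanded by Lemma \ref{arapel1}. The clean fact that the lower-bound exponent collapses to $\frac{1}{\beta}$ is precisely what makes the roles of the first two coordinates interchangeable between the two propositions.
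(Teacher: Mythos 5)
Your proposal is correct and follows exactly the route the paper intends: the paper only states that Proposition \ref{210} is proved by ``an analogous argument'' to Proposition \ref{110}, using the lower bound $C_{3,(\alpha,\beta,\gamma)}^{\mathbb{R}}\geq 2^{\frac{2\alpha\gamma+\alpha\beta+\beta\gamma-2\alpha\beta\gamma}{\alpha\beta\gamma}}$ from Theorem \ref{outro_opera} (whose exponent collapses to $\tfrac{1}{\beta}=\tfrac{\theta+3}{4}$, as you note), the corners $(2,\tfrac43,\tfrac43)$ and $(\tfrac43,\tfrac43,2)$ with Proposition \ref{up} for $\theta=0$, and interpolation with $(2,1,2)$ (constant $2$ from the Corollary in Section 3) for $\theta\in(0,1]$. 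All of your computations check out, so this is essentially the same proof the authors have in mind.
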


\begin{center}
\begin{figure}[tbh]
\includegraphics[scale=0.4]{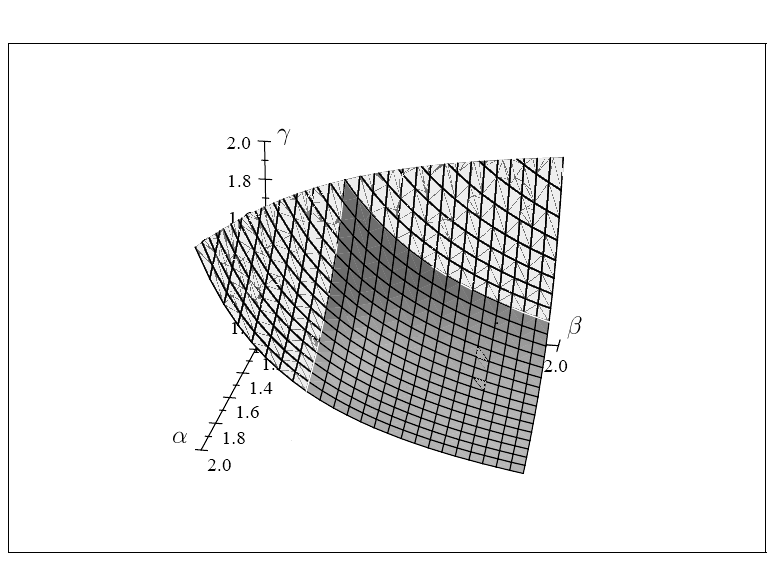}
\caption{The $3$-linear case: The graph, whose origin is $(1,1,1)$,  represents points such that $%
\left( \protect\alpha ,\protect\beta ,\protect\gamma \right) \in \left[ 1,2%
\right] ^{3}$ with $\frac{1}{\protect\alpha }+\frac{1}{\protect\beta }+\frac{%
1}{\protect\gamma }=2$. In the less dark region we have the points where the optimality was proved (Proposition \protect\ref{110} and Proposition \protect\ref{210})}
\end{figure}
\end{center}

\pagebreak

\end{document}